\documentclass[a4paper,12pt]{article}
\usepackage[utf8]{inputenc}
\usepackage{amsfonts}
\usepackage{amssymb}
\usepackage{amsthm}
\usepackage{amsmath}
\usepackage{comment}
\usepackage{xcolor}
\usepackage[margin=3cm]{geometry}
\usepackage{bm}
\usepackage[english]{babel}
\usepackage[numbers,sort, compress]{natbib}
\usepackage[colorlinks=true, allcolors=blue]{hyperref}

\newtheorem{theorem}{Theorem}
\newtheorem{corollary}{Corollary}
\newtheorem{lemma}{Lemma}

\theoremstyle{definition}
\newtheorem{definition}{Definition}
\newtheorem{notation}{Notation}

\newtheorem{remark}{Remark}

\usepackage{titling}
\thanksmarkseries{arabic}

\DeclareMathOperator{\dd}{d}
\DeclareMathOperator{\sep}{sep}
\DeclareMathOperator{\grlex}{grlex}
\DeclareMathOperator{\Frac}{Frac}
\newcommand{\Mer}{\mathcal{M}er}

\title{From algebra to analysis: new proofs of\\ theorems by Ritt and Seidenberg}

\author{D.\,Pavlov\thanks{Faculty of Mechanics and Mathematics, Moscow State University, Russia, e-mail: \url{dmmpav@gmail.com}} \and G.\,Pogudin\thanks{LIX, CNRS, \'Ecole Polytechnique, Institute Polytechnique de Paris, Palaiseau, France, email: \url{gleb.pogudin@polytechnique.edu}} \and Yu.P.\,Razmyslov\thanks{Faculty of Mechanics and Mathematics, Moscow State University, Russia, e-mail: \url{ynona_olga@rambler.ru}}}

\date{}

\begin{document}

\maketitle

\begin{abstract}
    Ritt's theorem of zeroes and Seidenberg's embedding theorem are classical results in differential algebra allowing to connect algebraic and model-theoretic results on nonlinear PDEs to the realm of analysis.
    However, the existing proofs of these results use sophisticated tools from constructive algebra (characteristic set theory) and analysis (Riquier's existence theorem).
    
    In this paper, we give new short proofs for both theorems relying only on basic facts from differential algebra and the classical Cauchy-Kovalevskaya theorem for PDEs.
\end{abstract}

\section{Introduction}

The algebraic theory of differential equations, also known as differential algebra~\cite{Ritt}, aims at studying nonlinear differential equations using methods of algebra and algebraic geometry.
For doing this, one typically abstracts from functions (analytic, meromorphic, etc) to elements of differential fields (fields equipped with a derivation or several commuting derivations).
This approach turned out to be fruitful yielding interesting results from theoretical and applied perspectives (see, e.g., \cite{Fliess1987, Boulier, vanderPut2003,Pila2016, jfunc}).
Furthermore, one can additionally use powerful tools from model theory to study differential fields (see, e.g., \cite{Marker, Nagloo2021}).

In this context, a fundamental question is how to transfer results about differential fields back to the realm of analysis.
There are two classical theorems in differential algebra typically used for this purpose:
\begin{itemize}
    \item \emph{Ritt's theorem of zeroes}~\cite[p. 176]{Ritt} which can be viewed as an analogue of Hilbert's Nullstellensatz. The theorem implies that any system of nonlinear PDEs having a solution in some differential field has a solution in a field of meromorphic functions on some domain.
    \item \emph{Seidenberg's embedding theorem}~\cite{Seid1958} which is often used as a differential analogue of the Lefschetz principle (e.g.~\cite{jfunc, Gauchman1989,Buium1995, Binyamini2017, Hardouin2008}).
    The theorem says that any countably generated differential field with several commuting derivations can be embedded into a field of meromorphic functions on some domain.
\end{itemize}

In~\cite{Seid1958}, Seidenberg gave a complete proof of his theorem for the case of a single derivation (see also~\cite[Appendix~A]{Marker}).
For the PDE case, he gave a sketch which reuses substantial parts of Ritt's proof of Ritt's zero theorem from~\cite{Ritt}.
The latter proof concludes the whole monograph and heavily relies on the techniques developed there.
In particular, Ritt's proof uses the machinery of characteristic sets~\cite[Chapter V]{Ritt} which is a fundamental tool in differential algebra but not so well-known in the broader algebra community and quite technical existence theorem for PDEs due to Riquier~\cite[Chapter VIII]{Ritt} (see also~\cite{Riquier}) which, to the best of our knowledge, is not discussed in the standard PDE textbooks.

Due to the importance of the theorems of Ritt and Siedenberg as bridges between the algebraic and analytic theories of nonlinear PDEs, we think that it is highly desirable to have short proofs of these theorems accessible to people with some general knowledge in algebra and PDEs.
In the present paper, we give such proofs.
Our proofs rely only on some basic facts from differential algebra and the classical Cauchy-Kovalevskaya theorem for PDEs.

Our proof strategy is inspired by the argument from~\cite[Theorem~3.1]{GRP} for the case of one derivation.
However, the techniques from~\cite{GRP} had to be substantially developed in order to tackle the PDE case (which is quite subtle~\cite{Lemaire2003}) and to prove both Ritt's and Seidenberg's theorem (not only the Ritt's as in~\cite{GRP}).
The key ingredients of the argument are an auxiliary change of derivations (Lemma~\ref{lemmacoef}) which helps us to bring a system of PDEs into the form as in the Cauchy-Kovalevaskaya theorem, Taylor homomorpishms (Definition~\ref{deftaylor}) allowing to build formal power series solutions, and a characterization of differentially simple algebras (Lemma~\ref{lemmasimple}).

The paper is organized as follows. 
Section~\ref{sec:preliminaries} contains the basic definitions used to state the main results in Section~\ref{sec:main}.
Section~\ref{sec:proofs_notions} contains relevant notions and facts from algebra and analysis used in the proofs.
The proofs are located in Section~\ref{sec:proofs}.
Section~\ref{sec:spec} contains a remark on the special case of algebras over $\mathbb{C}$.

%%%%%%%%%%%%%%%%%%%%%%%%%%%%%%%%%%%%%%%%%%%%%%%%%%%%%%%%%%%%%%

\section{Preliminaries}\label{sec:preliminaries}

\subsection{Algebra}

Throughout the paper, all algebras are assumed to be \emph{unital} (that is, with a multiplicative identity element).

\begin{notation}[Multi-indices]
  For every $\alpha = (\alpha_1, \ldots, \alpha_m) \in \mathbb{Z}_{\geqslant 0}^m$ and for every tuple $t = (t_1, \ldots, t_m)$ of elements of a ring, we denote
  \[
    t^{\alpha} := t_1^{\alpha_1}\cdot \ldots \cdot t^{\alpha_m} \quad\text{ and }\quad \alpha! := \alpha_1!\cdot \ldots \cdot \alpha_m!.
  \]
\end{notation}

\begin{definition}[Differential rings and algebras]
Let $\Delta = \{ \delta_1, \ldots, \delta_m\}$ be a set of symbols.
\begin{itemize}
    \item Let $R$ be a commutative ring. An additive map $\delta \colon R \to R$ is called \emph{derivation} if $\delta(ab) = \delta(a) b + a \delta(b)$ for any~$a,b\in R$.
    \item A commutative ring $R$ is called \emph{$\Delta$-ring} if $\delta_1, \ldots, \delta_m$ act on $R$ as pairwise commuting derivations.
    If $R$ is a field, it is called \emph{$\Delta$-field}.
    
    \item Let $A$ be a commutative algebra over ring $R$.
    If $A$ and $R$ are $\Delta$-rings and the action of $\Delta$ on $R$ coincides with the restriction of the action of $\Delta$ on $R\cdot 1_A \subseteq A$, then $A$  is called \emph{$\Delta$-algebra} over $R$.
\end{itemize}
\end{definition}

\begin{definition}[Differential generators]
  Let $A$ be a $\Delta$-algebra over a $\Delta$-ring $R$.
  A set $S \subseteq A$ is called a set of \emph{$\Delta$-generators} of $A$ over $R$ if the set
  \[
      \{ \delta^{\alpha} s \mid s \in S,\; \alpha \in \mathbb{Z}_{\geqslant 0}^m\}
  \]
  of all the derivatives of all the elements of $S$ generates $A$ as $R$-algebra.
  A $\Delta$-algebra is said to be $\Delta$-finitely generated if it has a finite set of $\Delta$-generators. 
  
  $\Delta$-generators for $\Delta$-fields are defined analogously.
\end{definition}

\begin{definition}[Differential homomorphisms]
Let~$A$ and~$B$ be $\Delta$-algebras over $\Delta$-ring~$R$. 
A map~$f\colon A \rightarrow B$ is called~\emph{$\Delta$-homomorphism} if~$f$ is a homomorphism of commutative~$R$-algebras and~$f(\delta a) = \delta f(a)$ for all~$\delta \in \Delta$ and~$a\in A$. 
An injective~$\Delta$-homomorphism is called \emph{$\Delta$-embedding}.
\end{definition}

\begin{definition}[Differential algebraicity]
Let $A$ be a $\Delta$-algebra over a $\Delta$-ring $R$.
An element~$a\in A$ is said to be~\emph{$\Delta$-algebraic} over~$R$ if the set~$\{ \delta^{\alpha}a \mid \alpha \in \mathbb{Z}_{\geqslant 0}^m \}$ of all the derivatives of $a$ is algebraically dependent over~$R$.

In other words, $a$ satisfies a nonlinear PDE with coefficients in $R$.
\end{definition}

%%%%%%%%%%%%%%%%%%%%%%%%%%%

\subsection{Analysis}
\begin{definition} [Multivariate holomorphic functions]
Let $U \subseteq \mathbb{C}^m$ be a domain.
A function~$f: U \rightarrow \mathbb{C}$ is called a \emph{holomorphic} function in~$m$ variables on $U$ if it is holomorphic on~$U$ with respect to each individual variable.
The set of all holomorphic functions on $U$ will be denoted by $\mathcal{O}_m(U)$
\end{definition}

\begin{notation}
  Let~$f$ be a holomorphic function on~$U \subseteq \mathbb{C}^m$. 
  By~$V(f)$ we denote the set of zeroes of~$f$.
\end{notation}

\begin{definition}[{Multivariate meromorphic functions, \cite[Chapter IV, Definition 2.1]{FL}}]
Let $U\subseteq \mathbb{C}^m$ be a domain.
A \emph{meromorphic} function on $U$ is a pair $(f, M)$, where~$M$ is a thin set in~$U$ and~$f \in \mathcal{O}_m(U\setminus M)$ with the following property: for every~$z_0\in U$, there is a neighbourhood~$U_0$ of~$z_0$ and there are functions $g, h \in \mathcal{O}_m(U_0)$, such that~$V(h)\subseteq M$ and 
\[
  f(z) = \dfrac{g(z)}{h(z)}~\text{ for every }~z\in U_0 \setminus M.
\]
The set of meromorphic functions on a domain~$U$ is denoted~$\mathcal{M}er_m(U)$. 
By convention we define $\mathcal{M}er_0(U) = \mathcal{O}_0(U) = \mathbb{C}$.

For every domain $U \subseteq \mathbb{C}^m$, the field $\Mer_m(U)$ has a natural structure of $\Delta$-field with $\delta_i \in \Delta$ acting as $\frac{\partial}{\partial z_i}$, where $z_1, \ldots, z_m$ are the coordinates in $\mathbb{C}^m$.
Furthermore, if $U \subseteq V$, then there is a natural $\Delta$-embedding $\Mer_m(V) \subseteq \Mer_m(U)$.
\end{definition}

%%%%%%%%%%%%%%%%%%%%%%%%%%%%%%%%%%%%%%%%%%%%%%%%%%%%%%%%

\section{Main Results}\label{sec:main}

\begin{theorem}[Seidenberg's embedding theorem]
Let $W \subseteq \mathbb{C}^m$ be a domain and
let~$K \subseteq \mathcal{M}er_m(W)$ be at most countably $\Delta$-generated~$\Delta$-field (over $\mathbb{Q}$).
Let $L \supset K$ be a $\Delta$-field finitely $\Delta$-generated over~$K$. 

Then there exists a domain $U \subseteq W$ and a~$\Delta$-embedding~$f\colon L \rightarrow \mathcal{M}er_m(U)$ over $K$. 
\end{theorem}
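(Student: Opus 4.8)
The plan is to build an explicit meromorphic realization of the $\Delta$-generators of $L$ by producing convergent power-series solutions of the relations they satisfy, and to upgrade these to an embedding using differential simplicity. First I would reduce to a single generator: writing $L = K\langle y_1, \ldots, y_n\rangle$, each intermediate field $K\langle y_1, \ldots, y_i\rangle$ is again at most countably $\Delta$-generated over $\mathbb{Q}$, so after realizing it inside some $\Mer_m(U_i)$ I may rename it $K$, shrink the domain, and adjoin the next generator; after $n$ steps this yields the theorem, and it suffices to treat $L = K\langle y\rangle$. Next I fix a base point: since $K$ is at most countably $\Delta$-generated over $\mathbb{Q}$ it is a countable field, so its nonzero elements have only countably many zero sets and pole sets, each thin in $W$; choosing $p \in W$ outside their union, every nonzero element of $K$ is holomorphic and nonvanishing at $p$. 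Taylor expansion at $p$ then gives a $\Delta$-embedding $T_K \colon K \hookrightarrow \mathbb{C}[[t_1, \ldots, t_m]]$, injective by the identity theorem, with $\delta_i$ acting as $\partial/\partial t_i$; this is the map I will extend.

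If $y$ is $\Delta$-transcendental over $K$, there are no relations to satisfy and it is enough to exhibit one meromorphic $\hat y$ on a small polydisc $U \ni p$ that is $\Delta$-transcendental over $K$ inside $\Mer_m(U)$. As $K$ is countable, the elements of $K$ contribute only countably many Taylor coefficients at $p$, so I would choose $\hat y$ holomorphic on $U$ whose Taylor coefficients at $p$ are algebraically independent over the subfield of $\mathbb{C}$ they generate; this forces $\{\delta^\alpha \hat y\}$ to be algebraically independent over $K$, and $y \mapsto \hat y$ extends $T_K$ to the desired $\Delta$-embedding.

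The substantive case is $y$ being $\Delta$-algebraic over $K$. Let $P \subseteq K\{Y\}$ be the prime $\Delta$-ideal of all $\Delta$-polynomial relations of $y$, so $A := K\{Y\}/P \cong K\{y\}$ and $L = \Frac(A)$; the goal is an $\hat y$ whose $\Delta$-ideal of relations over $K$ is exactly $P$. I proceed in three moves. First, using Lemma~\ref{lemmacoef} I perform a linear change of the derivations (equivalently, of coordinates on $\mathbb{C}^m$) so that a suitable generator of $P$ can be solved for a pure power $\delta_1^{d} Y$ with right-hand side rational over $K$ and holomorphic at $p$ in all its arguments, i.e.\ so the system acquires Cauchy--Kovalevskaya form. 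Second, choosing a $\mathbb{C}$-point $\varepsilon \colon A \to \mathbb{C}$ lying over the evaluation $\mathrm{ev}_p$ on $K$, the associated Taylor homomorphism of Definition~\ref{deftaylor}, $a \mapsto \sum_\alpha \tfrac{\varepsilon(\delta^\alpha a)}{\alpha!}\, t^\alpha$, is a $\Delta$-homomorphism $A \to \mathbb{C}[[t]]$ extending $T_K$; after inverting a suitable nonzero element I may assume $A$ is $\Delta$-simple with $\Frac A = L$, and Lemma~\ref{lemmasimple} then shows that the kernel of this Taylor homomorphism, being a $\Delta$-ideal not containing $1$, is $0$, so it realizes exactly $P$. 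Third, I apply the Cauchy--Kovalevskaya theorem to the system from the first move with initial data read off from $\varepsilon$: it produces a genuine holomorphic solution on a polydisc $U \ni p$ whose Taylor series at $p$ is the formal solution just constructed. Hence the injective Taylor map lands in convergent series, giving a $\Delta$-embedding $A \hookrightarrow \Mer_m(U)$ over $K$, which extends to $L = \Frac(A)$ because $\Mer_m(U)$ is a field.

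The main obstacle is the interplay of the first and second moves: a single $\Delta$-algebraic element may satisfy a complicated infinite system whose leading structure is not of Cauchy--Kovalevskaya type, and the delicacy of the PDE case is exactly that an arbitrary solution need not realize $P$. The change of derivations of Lemma~\ref{lemmacoef} is what isolates one equation in the required form with coefficients holomorphic at $p$, while the differential simplicity characterized by Lemma~\ref{lemmasimple} is what forces the convergent solution to be generic, making $y \mapsto \hat y$ an isomorphism of $\Delta$-fields rather than a proper specialization. That the embedding is over $K$ is then immediate from the choice of $p$, since $T_K$ reproduces the honest Taylor expansions of the elements of $K$.
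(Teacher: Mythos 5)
There are two genuine gaps, and together they sink the argument at exactly the points where the PDE case is hard. The first is your claim that ``after inverting a suitable nonzero element I may assume $A$ is $\Delta$-simple.'' This is false in general. Take $y$ to be a new constant: $L = K(x)$ with $\delta_i x = 0$ for all $i$, so $x$ is $\Delta$-algebraic over $K$ (via $P = \delta_1 Y$) and your algebraic case applies, with $A = K[x]$. For any nonzero $s \in K[x]$, pick a rational number $c$ with $s(c) \neq 0$; then $(x - c)$ generates a proper nonzero $\Delta$-ideal of $A[1/s]$ (it is differential because $c$ is a constant, and proper because $s(c)\neq 0$), so \emph{no} localization of $A$ is $\Delta$-simple. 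Lemma~\ref{lemmasimple} only characterizes $\Delta$-simple algebras; it does not produce them, and in the paper simplicity enters only in Ritt's theorem, where one \emph{quotients} by a maximal $\Delta$-ideal --- an operation that specializes $y$ and destroys precisely the injectivity you need for Seidenberg. What your plan actually requires is a $\mathbb{C}$-point $\varepsilon$ whose kernel contains no nonzero $\Delta$-ideal; in the example this forces $\varepsilon(x)$ to be transcendental over a countable field, and producing such a differentially generic point in general is essentially the whole theorem, not a localization trick.

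The second gap is in your third move: the Cauchy--Kovalevskaya theorem requires initial data $\varphi_0,\ldots,\varphi_{r-1}$ that are \emph{holomorphic functions} of $z_2,\ldots,z_m$, whereas a $\mathbb{C}$-point $\varepsilon$ supplies only their formal Taylor coefficients $\varepsilon(\delta^\alpha y)$, $\alpha_1 < r$. A formal solution of a Cauchy--Kovalevskaya-form equation with divergent data is itself divergent --- already for $\delta_1 y = 0$, whose formal solutions are arbitrary (possibly divergent) series in $t_2,\ldots,t_m$ --- so nothing forces your series to have positive radius of convergence. Making the $(m-1)$-variable data genuinely meromorphic is a Seidenberg problem in one fewer derivation, and that is the architecture your proposal omits: the paper inducts on $m$, using Lemma~\ref{lemmafinite} (which you never invoke) to show $L$ is finitely $\Delta_0$-generated after a localization, Lemma~\ref{lemmamer} to restrict $K$ to a hyperplane $\{z_1 = w_1\}$, and the induction hypothesis to get a $\Delta_0$-embedding $h \colon L \to \Mer_{m-1}(\widetilde{V})$; the Taylor expansion is then taken in $z_1$ alone with coefficients $h(\delta_1^k a)$, so the initial data are honest holomorphic functions and injectivity comes for free (the constant term of $T_{h,w}(a)$ is $h(a)$), with no appeal to simplicity at all. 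Your choice of a generic base point, the transcendental case, and the use of Lemma~\ref{lemmacoef} do match the paper's Lemmas~\ref{lemmamer}, \ref{lemmatransc}, and~\ref{lemmacoef}, but without the induction on the number of derivations the two central difficulties --- genericity of the formal solution and convergence of the initial data --- remain unresolved.
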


\begin{theorem}[Ritt's theorem of zeroes]
Let $W \subseteq \mathbb{C}^m$ be a domain and
let~$K \subseteq \mathcal{M}er_m(W)$ be a~$\Delta$-field. 
Let~$A$ be a finitely generated~$\Delta$-algebra over~$K$. 

Then there exists a non-trivial $\Delta$-homomorphism~$f: A \rightarrow \mathcal{M}er_m(U)$ for some domain ${U \subseteq W \subseteq \mathbb{C}^m}$ such that $f(a)$ is~$\Delta$-algebraic over~$K$ for any~$a\in A$.
\end{theorem}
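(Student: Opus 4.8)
The plan is to first reduce to the case of a differentially simple algebra and then to realize it inside a ring of convergent power series via the Cauchy--Kovalevskaya theorem. First I would choose a maximal $\Delta$-ideal $\mathfrak{m} \subsetneq A$ (which exists by Zorn's lemma since $A \neq 0$) and pass to the quotient $B := A/\mathfrak{m}$. Then $B$ is a $\Delta$-simple $\Delta$-algebra, finitely generated over $K$, and since $A$ is a $\mathbb{Q}$-algebra the maximal $\Delta$-ideal $\mathfrak{m}$ is prime, so $B$ is an integral domain. As the projection $A \twoheadrightarrow B$ is a nontrivial $\Delta$-homomorphism, it suffices to construct a nontrivial $\Delta$-homomorphism $B \to \Mer_m(U)$ over $K$ and compose. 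Two features of $\Delta$-simplicity are exploited here: the kernel of any nontrivial $\Delta$-homomorphism out of $B$ is a $\Delta$-ideal, hence zero, so the map is automatically a $\Delta$-embedding; and, by the characterization of differentially simple algebras (Lemma~\ref{lemmasimple}), every element of $B$ is $\Delta$-algebraic over $K$, which will yield the required algebraicity of the image.

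Next I would put the defining equations of $B$ into a form suitable for Cauchy--Kovalevskaya. Fixing $\Delta$-generators $y_1, \ldots, y_n$ of $B$, each is $\Delta$-algebraic over $K$ and hence satisfies a nonlinear PDE with coefficients in $K$. Applying the auxiliary change of derivations (Lemma~\ref{lemmacoef}), I would replace $\Delta$ by a generic linear combination so that, after distinguishing one derivation (say $\delta_1$), the system can be solved for the top $\delta_1$-order derivatives of the generators in terms of the remaining ones, with leading coefficients that are invertible in a neighbourhood of a suitably chosen regular point $w_0 \in W$. This is the Kovalevskaya normal form.

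Finally I would build a formal solution and prove that it converges. Choosing the regular point $w_0$ together with a compatible ring homomorphism $\varphi \colon B \to \mathbb{C}$ restricting to $h \mapsto h(w_0)$ on $K$ (its existence and good behaviour being supplied by $\Delta$-algebraicity and the structure theory of Lemma~\ref{lemmasimple}), I would form the associated Taylor homomorphism (Definition~\ref{deftaylor})
\[
  T \colon B \to \mathbb{C}[[t_1, \ldots, t_m]], \qquad T(b) = \sum_{\alpha \in \mathbb{Z}_{\geqslant 0}^m} \frac{\varphi(\delta^{\alpha} b)}{\alpha!}\, t^{\alpha},
\]
which one checks is a $\Delta$-homomorphism sending each $\delta_i$ to $\partial / \partial t_i$ and restricting on $K$ to the Taylor expansion at $w_0$. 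The series $T(y_1), \ldots, T(y_n)$ then solve the Kovalevskaya-normal system formally. By the Cauchy--Kovalevskaya theorem, that system with the corresponding analytic Cauchy data has a genuine analytic solution near $w_0$; since a Kovalevskaya-normal system has a unique formal power series solution for given data, each $T(y_j)$ must coincide with the Taylor expansion of this analytic solution and therefore converges on a polydisc $U \subseteq W$ around $w_0$. Consequently $T(B) \subseteq \mathcal{O}_m(U) \subseteq \Mer_m(U)$, and because the Taylor expansion of $h \in K$ represents the restriction $h|_U$, the resulting embedding is a $\Delta$-homomorphism over $K$; composing with $A \twoheadrightarrow B$ gives the desired $f$.

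I expect the convergence step to be the main obstacle, and the place where the hypotheses do real work. The difficulty is genuinely analytic: passing from a formal Taylor series to an honest holomorphic function requires the system to be in Kovalevskaya normal form, which is exactly what Lemma~\ref{lemmacoef} secures, and it requires the formal and analytic solutions to be matched through uniqueness. The multi-derivation setting is what makes this delicate---several commuting derivations must be reconciled with a single distinguished direction, the leading coefficients may degenerate away from generic points, and the Cauchy data prescribed by $\varphi$ must be consistent with \emph{all} the differential relations simultaneously. Differential simplicity is precisely the property guaranteeing this consistency, so Lemma~\ref{lemmasimple} is the linchpin connecting the algebraic reduction to the analytic existence theorem.
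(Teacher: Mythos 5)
Your overall skeleton (quotient by a maximal $\Delta$-ideal, Lemma~\ref{lemmasimple} for $\Delta$-algebraicity, Lemma~\ref{lemmacoef} for Kovalevskaya normal form, a Taylor homomorphism, Cauchy--Kovalevskaya) follows the paper's toolkit, but two steps are genuinely broken. First, the homomorphism $\varphi\colon B\to\mathbb{C}$ ``restricting to $h\mapsto h(w_0)$ on $K$'' need not exist: in Ritt's theorem $K$ is an \emph{arbitrary} $\Delta$-subfield of $\Mer_m(W)$, e.g.\ $K=\Mer_m(W)$ itself, and then every point of $W$ is a singularity of some element of $K$, so there is no point $w_0$ at which evaluation of all of $K$ is defined. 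The meagre-union argument for choosing a good point (Lemma~\ref{lemmamer}) works only for countably $\Delta$-generated fields. This is precisely why the paper's proof begins with a descent: writing $A=R/J$, it extracts a countably $\Delta$-generated subfield $K_0$ containing the coefficients of generators of $J$, works with $A_0=R_0/(J\cap R_0)$, and at the very end recovers $f$ on $A=K\otimes_{K_0}A_0$ by tensoring with the inclusion $K\subseteq\Mer_m(U)$. Your proposal omits this reduction, and without it the construction does not start.

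Second, the convergence argument is circular. The Cauchy--Kovalevskaya theorem requires Cauchy data $\varphi_0,\ldots,\varphi_{r-1}$ that are \emph{holomorphic functions} of $z_2,\ldots,z_m$; in your one-shot construction $T\colon B\to\mathbb{C}[[t_1,\ldots,t_m]]$, the data attached to $T(y_j)$ are themselves formal power series in $t_2,\ldots,t_m$ whose convergence is exactly the problem being solved, so there is no analytic solution for the uniqueness argument to match $T(y_j)$ against. The paper resolves this not at a single point but by induction on the number $m$ of derivations: it first proves Seidenberg's theorem, using Lemma~\ref{lemmamer} to restrict $K$ to a hyperplane $\{z_1=w_1\}$ and the induction hypothesis to produce a $\Delta_0$-embedding $h\colon L\to\Mer_{m-1}(\widetilde V)$, so that the Cauchy data $h(a_i),\ldots,h(\delta_1^{r_i-1}a_i)$ are genuinely meromorphic in $z_2,\ldots,z_m$; only then does the one-variable Taylor homomorphism $T_{h,w}$ combined with Cauchy--Kovalevskaya yield convergence, and Ritt's theorem is deduced from Seidenberg's (Theorem~\ref{main}). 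Your direct approach would have to rebuild this induction anyway; as written, the step ``each $T(y_j)$ must coincide with the Taylor expansion of this analytic solution'' has no analytic solution to point to. (Minor points: taking $\mathfrak{m}$ maximal differential in $A$ rather than in $A_0$ is fine in itself, and primality of $\mathfrak{m}$ in characteristic $0$ is a valid alternative to the paper's derivation of domain-ness via the Taylor embedding; also note that the ``furthermore'' part of Lemma~\ref{lemmasimple} needs its $\Delta$-integrality hypotheses, which must be arranged through Lemma~\ref{lemmacoef} first.)
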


\begin{corollary}\label{cor:holomorphic}
  Let~$A$ be a finitely $\Delta$-generated~$\Delta$-algebra over~$\mathbb{C}$. Then there exists a non-trivial $\Delta$-homomorphism~$f\colon A \rightarrow \mathcal{O}_m(U)$ for some domain ${U \subseteq \mathbb{C}^m}$. 
\end{corollary}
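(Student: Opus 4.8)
The plan is to derive Corollary~\ref{cor:holomorphic} from Ritt's theorem of zeroes by specializing to $K = \mathbb{C}$ and then upgrading the meromorphic homomorphism to a holomorphic one. First I would apply Ritt's theorem with $W = \mathbb{C}^m$ and $K = \mathbb{C}$, which is indeed a $\Delta$-subfield of $\Mer_m(\mathbb{C}^m)$ (the constants). This yields a domain $U \subseteq \mathbb{C}^m$ and a non-trivial $\Delta$-homomorphism $f\colon A \to \Mer_m(U)$. At this point every $f(a)$ is $\Delta$-algebraic over $\mathbb{C}$, but the target is a field of meromorphic functions, so the images may have poles; the content of the corollary is that we can arrange the images to be genuinely holomorphic.

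The key observation is that $A$ is finitely $\Delta$-generated, say by $a_1, \ldots, a_n$. Their images $f(a_1), \ldots, f(a_n)$ are finitely many meromorphic functions on $U$, and all of $f(A)$ lies in the $\Delta$-subalgebra of $\Mer_m(U)$ generated by these finitely many functions together with their derivatives. I would pick a point $z_0 \in U$ that is not a pole of any $f(a_i)$ (such a point exists since each pole set is a thin set, hence the union of finitely many of them is still thin and cannot exhaust the domain $U$). Around $z_0$ each $f(a_i)$ is holomorphic, so on a sufficiently small polydisc $U_0 \subseteq U$ centered at $z_0$ all the generators $f(a_i)$ are holomorphic.

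The remaining step is to check that \emph{all} of $f(A)$, not just the generators, consists of holomorphic functions on $U_0$. This follows because the ring $\mathcal{O}_m(U_0)$ of holomorphic functions on $U_0$ is closed under the derivations $\delta_i = \partial/\partial z_i$ and under the $\mathbb{C}$-algebra operations. Hence the $\Delta$-subalgebra of $\Mer_m(U_0)$ generated over $\mathbb{C}$ by $f(a_1), \ldots, f(a_n)$ is contained in $\mathcal{O}_m(U_0)$: every element of $f(A)$ is a polynomial over $\mathbb{C}$ in the $\delta^{\alpha} f(a_i)$, and each such derivative is holomorphic on $U_0$, so the whole image lands in $\mathcal{O}_m(U_0)$. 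Composing $f$ with the natural $\Delta$-restriction $\Mer_m(U) \to \Mer_m(U_0)$ (valid since $U_0 \subseteq U$) gives the desired $\Delta$-homomorphism into $\mathcal{O}_m(U_0)$, and it is non-trivial because $f$ was and restriction to a nonempty open subset is injective on holomorphic functions.

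The main obstacle I expect is purely the bookkeeping of ensuring the shrinking domain $U_0$ is chosen so that \emph{all} generators are simultaneously holomorphic; this is where finite $\Delta$-generation is essential, since it bounds the number of pole sets one must avoid to finitely many, guaranteeing a common holomorphic polydisc exists. No genuinely hard analytic input is needed beyond the fact that holomorphy is an open, derivation-closed, ring-closed condition.
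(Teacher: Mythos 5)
Your proposal is correct and takes essentially the same approach as the paper's own proof: apply Ritt's theorem with $K=\mathbb{C}$, use the finitely many $\Delta$-generators $a_1,\ldots,a_n$ to find a subdomain on which their images are simultaneously holomorphic, and restrict $f$ to that subdomain. The extra details you supply (avoiding the finitely many thin pole sets, and checking that $\mathcal{O}_m(U_0)$ is closed under the derivations and the algebra operations so that all of $f(A)$ lands in it) simply spell out steps the paper leaves implicit.
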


\begin{proof}
  Ritt's theorem yields the existence of a $\Delta$-homomorphism $f\colon A\to \Mer_m(W)$.
  Let $a_1, \ldots, a_n$ be a set of $\Delta$-generators of $A$.
  There is a domain $U \subseteq W$ such that $f(a_1), \ldots, f(a_n)$ are holomorphic in $U$.
  Therefore, the restriction of $f$ to $U$ yields a $\Delta$-homomorphism $A \to \mathcal{O}_m(U)$.
\end{proof}

%%%%%%%%%%%%%%%%%%%%%%%%%%%%%%%%%%%%%%%%%%%%%%%%%%%%%%

\section{Notions and results used in the proofs}\label{sec:proofs_notions}

\subsection{Algebra}

% \begin{notation}
%   Let~$R$ be a ring,~$\delta$ be a derivation of~$R$,~$a\in R$ and~$r \in \mathbb{N}$. By~$\delta^{(\leqslant r)}a$ we denote the set~$\{a,\delta a,\ldots,\delta^r a\}$. By~$\delta^{(<r)}a$ we denote the set~$\{a,\delta a,\ldots,\delta^{r-1} a\}$.
% \end{notation}

\begin{notation}
  Let $R$ be a $\Delta$-ring.
  By~$R[[z_1,\ldots,z_m]]$ we denote the ring of formal power series over~$R$ in variables~$z_1,\ldots,z_m$.
  It has a natural structure of $\Delta$-algebra over $R$ with $\delta_i \in \Delta$ acting as $\frac{\partial}{\partial z_i}$.
\end{notation}

\begin{definition}[Taylor homomorphisms]\label{deftaylor}
Let $A$ be a $\Delta$-algebra over $\Delta$-field $K$, $L \supseteq K$ be a~$\Delta$-field and the action of $\Delta$ on $L$ be trivial.
Let $\psi\colon A \to L$ be a (not necessarily differential) homomorphism of $K$-algebras. Let~$w\in L^m$.
Then we define a map called \emph{Taylor homomorphism} $T_{\psi, w} \colon A \to L[[t_1,\ldots,t_m]]$ by the formula
\[
  T_{\psi, w}(a) := \sum\limits_{\alpha \in \mathbb{Z}^m_{\geqslant 0}} \psi(\delta^\alpha a)\dfrac{(t-w)^\alpha}{\alpha!} \quad \text{ for every } a\in A.
\]
Direct computation shows~\cite[\S 44.3]{YuPbook} that that $T_{\psi, w}$ is a $\Delta$-homomorphism.
\end{definition}

\begin{notation}
  Let $R$ be a $\Delta$-ring.
  For every subset $S \subseteq R$, by~$\Delta^\infty S$ we denote the set $\{\delta^\alpha s | \alpha\in \mathbb{Z}^m_{\geqslant 0}, s\in S\}$ of all derivatives of the elements of~$S$.
\end{notation}

\begin{definition}[Differential polynomials]
Let $R$ be a $\Delta$-ring.
Consider an algebra of polynomials 
\[
 R[\Delta^\infty x_1,\ldots, \Delta^\infty x_n] := R[\delta^\alpha x_i| \alpha\in\mathbb{Z}^m_{\geqslant 0}, i=1,\ldots,n]
\]
in infinitely many variables $\delta^\alpha x_i$.
We define the structure of $\Delta$-algebra over $R$ by
\[
  \delta_i (\delta^\alpha x_j) := (\delta_i \delta^\alpha) x_j \text{ for every } 1\leqslant i \leqslant m,\; 1\leqslant j \leqslant n,\; \alpha\in \mathbb{Z}_{\geqslant 0}^m.
\]
The resulting algebra is called the \emph{the algebra of $\Delta$-polynomials} in $x_1, \ldots, x_n$ over $R$.
\end{definition}

\begin{definition}[Separants] \label{defsepinit}
Let $R$ be a $\Delta$-ring.
Let~$P(x) \in R[\Delta^\infty x]$. 
We introduce an~ordering on the derivatives of~$x$ as~follows:
\begin{equation}\label{eq:ord}
  \delta^\alpha x < \delta^\beta x\iff \alpha <_{\grlex} \beta,
\end{equation}
where~$\grlex$ is the graded lexicographic ordering of~$\mathbb{Z}^m_{\geqslant 0}$.
Let $\delta^\mu x$ be the highest (w.r.t. the introduced ordering) derivative appearing in~$P$. 
Consider~$P$ as a univariate polynomial in~$\delta^{\mu}x$ over~$R[\delta^\alpha x| \alpha <_{\grlex} \mu]$. 
We define the \emph{separant} of $P$ by
\[
  \sep_x^{\Delta}(P) := \frac{\partial}{\partial (\delta^\mu x)} P.
\]
\end{definition}

\begin{remark}
Throughout the rest of the paper, we assume that the ordering of a set of derivatives of an element of a~$\Delta$-algebra is the one defined in~\eqref{eq:ord}.
\end{remark}

\begin{definition}[Differential algebraicity and transcendence]
  Let $R$ be a $\Delta$-ring and let $A$ be a $\Delta$-algebra over $R$.
  \begin{itemize}
      \item A subset~$S\subseteq A$ is said to be \emph{$\Delta$-dependent} over~$R$ if~$\Delta^\infty S$ is algebraically dependent over~$R$. 
Otherwise, $S$ is called \emph{$\Delta$-independent} over~$R$.
      \item An element~$a\in A$ is said to be~\emph{$\Delta$-algebraic} over~$R$ if the set~$\{a\}$ is $\Delta$-dependent over~$R$. 
Otherwise, $a$ is called $\Delta$-transcendental over $R$.
  \end{itemize}
\end{definition}

\begin{definition}[Differential transcendence degree] Let~$A$ be a~$\Delta$-algebra over field~$K$. 
Any maximal~$\Delta$-independent over~$K$ subset of~$A$ is called a \emph{$\Delta$-transcendence basis} of~$A$ over~$K$. 
The cardinality of a $\Delta$-transcendence basis does not depend on the choice of the basis~\cite[II.9, Theorem~4]{Kolchin} and is called the~\emph{$\Delta$-transcendence degree} of~$A$ over~$K$ (denoted by~$\operatorname{difftrdeg}_K^\Delta A$).
\end{definition}

\begin{definition}[Differential ideals]
  Let~$R$ be a $\Delta$-ring. 
  A subset~$I \subseteq R$ is called a \emph{differential ideal} if it is an ideal of~$A$ considered as a commutative algebra and~$\delta a \in I$ for any~$\delta\in \Delta$ and~$a\in I$.
\end{definition}

\begin{notation}
Throughout the rest of the paper, we use the notation $\Delta_0 := \Delta \setminus \{\delta_1\}$.
\end{notation}

%%%%%%%%%%%%%%%%%%%%%%%%%%%%%%%%%%%%%%%%%%%%%%%%%%

\subsection{Analysis}
The following is a special case of the Cauchy-Kovalevskaya theorem~\cite[Chapter V, \textsection 94]{Goursat} which is sufficient for our purposes.
\begin{theorem}[Cauchy-Kovalevskaya]
Consider holomorphic functions in variables $z_1, \ldots, z_m$.
The operator of differentiation with respect to $z_i$ will be denoted by $\delta_i$ for $i = 1, \ldots, m$.
For a positive integer $r$, we introduce a set of multi-indices $M_r := \{\alpha \in \mathbb{Z}_{\geqslant 0}^m \mid |\alpha| \leqslant r, \alpha_1 < r\}$.
Consider a~PDE in an unknown function $u$
\begin{equation} \label{syskov}
    \delta_1^{r} u = F(z_1, \ldots, z_m; \delta^{\alpha}u \mid \alpha \in M_r),
\end{equation}
where $F$ is a rational function over $\mathbb{C}$ in $z_1, \ldots, z_m$ and derivatives $\{\delta^\alpha u \mid \alpha \in M_r\}$.

Consider complex numbers $a_1, \ldots, a_m$ and functions $\varphi_0, \ldots, \varphi_{r - 1}$ in variables $z_2, \ldots, z_m$ holomorphic in a neighbourhood of $(a_2, \ldots, a_m)$ such that $F$ is well-defined under the substitution:
\begin{enumerate}
    \item $a_i$ for $z_i$ for every $1 \leqslant i \leqslant m$
    \item and $(\delta^{(\alpha_2, \ldots, \alpha_m)}\varphi_{\alpha_1})(a_2, \ldots, a_m)$ for $\delta^\alpha u$ for every $\alpha \in M_r$.
\end{enumerate}
Then there is a unique function $u$ holomorphic in a neighborhood of $(a_1, \ldots, a_m)$ satisfying~\eqref{syskov} and
\[
    (\delta_1^i u)|_{z_1 = a_1} = \varphi_i \quad\text{ for every }\quad 0 \leqslant i < r.
\]
\end{theorem}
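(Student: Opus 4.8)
The plan is to reduce the statement to the classical scalar Cauchy–Kovalevskaya theorem in Kovalevskaya normal form (as in \cite{Goursat}); the only real content is to check that the stated hypotheses put the equation exactly into that form. After an inessential translation I may assume $a_1 = \cdots = a_m = 0$. The key preliminary observation is that, although $F$ is merely rational, the well-definedness assumption means precisely that its denominator does not vanish at the base point $(0; (b_\alpha)_{\alpha \in M_r})$ in the space of arguments of $F$, where $b_\alpha := (\delta^{(\alpha_2,\ldots,\alpha_m)}\varphi_{\alpha_1})(0)$; hence $F$ is holomorphic on some polydisc around this point. This is exactly the regularity of the right-hand side demanded by the classical theorem.

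Next I would remove the inhomogeneous initial data. Put $P := \sum_{i=0}^{r-1} \varphi_i \, \frac{z_1^i}{i!}$, a polynomial of $z_1$-degree $< r$ with holomorphic coefficients, and set $v := u - P$. Then $\delta_1^r P = 0$ and $(\delta_1^i P)|_{z_1 = 0} = \varphi_i$ for $i < r$, so $v$ satisfies an equation of the same shape $\delta_1^r v = \widetilde F(z; \delta^\alpha v \mid \alpha \in M_r)$, where $\widetilde F(z; (w_\alpha)) := F(z; (w_\alpha + \delta^\alpha P))$ is again holomorphic near the corresponding base point, while all initial data of $v$ now vanish. It therefore suffices to treat the case $\varphi_0 = \cdots = \varphi_{r-1} = 0$. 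At this point the equation is a single PDE in Kovalevskaya normal form — solved for the pure derivative $\delta_1^r v$, with a right-hand side depending only on derivatives $\delta^\alpha v$ of total order $\leqslant r$ and with $\alpha_1 < r$ — so the classical theorem applies verbatim and produces a unique holomorphic $v$, hence the desired unique $u = v + P$.

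If instead a self-contained argument is wanted, I would prove the normal-form case directly by the method of majorants. The easy half is the existence of a unique formal solution: writing $v = \sum_\alpha c_\alpha \frac{z^\alpha}{\alpha!}$, the zero initial data force $c_\alpha = 0$ whenever $\alpha_1 < r$, while repeatedly differentiating the equation and evaluating at the origin expresses each remaining $c_\alpha$ as a universal polynomial, with nonnegative integer coefficients, in the Taylor coefficients of $\widetilde F$ and in the $c_\beta$ of strictly smaller $z_1$-degree (strictly smaller because every $\alpha \in M_r$ has $\alpha_1 < r$). This recursion terminates, so the formal series is uniquely determined; in particular any two holomorphic solutions share their Taylor series and hence coincide. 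The hard part is \emph{convergence}. For this I would first rewrite the order-$r$ equation as a first-order system, taking the derivatives $\delta^\beta v$ with $\beta_1 < r$ as new unknowns, and then dominate the holomorphic right-hand side on a polydisc, via the Cauchy estimates, by a single geometric majorant of the form $\frac{C R}{R - (z_1 + \cdots + z_m + \text{(sum of the unknowns)})}$. The associated majorant system admits an explicit convergent solution — all components equal and depending only on $z_1$ and $z_2 + \cdots + z_m$, which collapses it to a first-order equation in two variables solvable by characteristics — and an induction on the $z_1$-degree shows that its nonnegative Taylor coefficients dominate the $|c_\alpha|$ term by term, since the identical recursion computes both. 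Convergence of the majorant then forces convergence of the series for $v$ on a smaller polydisc. The main obstacle is precisely this convergence step: arranging a majorant problem that is simultaneously explicitly solvable and dominates the genuine recursion coefficient by coefficient.
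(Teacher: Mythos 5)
The paper contains no proof of this statement at all: it is imported as a known result, stated as ``a special case of the Cauchy-Kovalevskaya theorem~\cite[Chapter V, \textsection 94]{Goursat}.'' So there is no argument in the paper to compare yours against; what your proposal does, correctly, is supply the reduction that justifies the phrase ``special case,'' plus an outline of the classical proof itself. Your two reduction steps are exactly right and are the only points where the paper's formulation differs from the textbook one: (i) well-definedness of the rational $F$ under the stated substitution means precisely that its denominator is nonzero at the base point, so $F$ is holomorphic on a polydisc around it, which is the regularity the classical theorem needs; (ii) subtracting $P = \sum_{i<r}\varphi_i z_1^i/i!$ kills the Cauchy data while preserving the form of the equation, since $\delta_1^r P = 0$ and $(\delta^\alpha P)(a) = (\delta^{(\alpha_2,\ldots,\alpha_m)}\varphi_{\alpha_1})(a_2,\ldots,a_m)$ for $\alpha \in M_r$, so $\widetilde F$ is holomorphic near the shifted base point (this step is in fact optional, as Goursat's statement already accommodates arbitrary holomorphic data). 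Your self-contained sketch is the standard majorant proof, and its one genuinely delicate point is correctly identified: the formal recursion is well-founded by induction on $\alpha_1$ precisely because every $\alpha \in M_r$ has $\alpha_1 < r$, and convergence needs a majorant problem that is both explicitly solvable and dominates the recursion coefficientwise. One detail to watch if you write the majorant argument out in full: after rewriting the order-$r$ equation as a first-order system, you must verify that the system's solution descends to a scalar $u$ with $u_\beta = \delta^\beta u$; this follows from compatibility of the initial data together with your formal-uniqueness observation, or can be avoided by running the coefficientwise domination directly against the scalar recursion, as your sketch in effect does.
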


\section{Proofs}\label{sec:proofs}

This section is structured as follows.
In Section~\ref{sec:dintegral}, we introduce the notion of $\Delta$-integral elements which is an algebraic way saying that an element satisfies a PDE as in the Cauchy-Kovalevskaya theorem. We prove that there always exists a linear change of derivations making a fixed element $\Delta$-integral (Lemma~\ref{lemmacoef}) and prove Lemma~\ref{lemmafinite} which is a key tool for reducing the problem to the same problem in fewer derivations.

Section~\ref{sec:seidenberg} contains the proof of Seidenberg's embedding theorem which proceeds by induction on the number of derivations using Lemma~\ref{lemmafinite}.
We deduce Ritt's theorem of zeroes in Section~\ref{sec:ritt} from Seidenberg's theorem and Lemma~\ref{lemmasimple} characterizing $\Delta$-simple algebras.

%%%%%%%%%%%%%%%%%%%%%%%%%%%%%%%%%%%%%%%%%%%%%%%%%%%%%%

\subsection{Differentially integral generators}\label{sec:dintegral}

\begin{definition}[$\Delta$-integral elements]
Let $R$ be a $\Delta$-ring and let $A$ be a $\Delta$-algebra over $R$.
An element~$a\in A$ is said to be~\emph{$\Delta$-integral} over~$R$ if there exists $P(x) \in R[\Delta^\infty x]$ such that 
\begin{itemize}
    \item $P(a)=0$, $\sep_x^\Delta (P) (a) \neq 0$;
    \item the highest (w.r.t. the ordering~\eqref{eq:ord}) derivative in~$P$ is of the form~$\delta_1^r x$.
\end{itemize}
\end{definition}

\begin{remark} \label{eqremark}
If~$a\in A$ is~$\Delta$-integral over~$R$, then the equality~$\delta_1 (P(a)) = 0$ can be rewritten as
\[
  \sep_x^\Delta P (a) \cdot \delta_1^{r+1}a = q(a), \quad \text{where } q\in R[\delta^\alpha x \mid \alpha <_{\grlex} (r + 1, 0, \ldots, 0)].
\]
Therefore, if $\sep_x^\Delta P (a)$ is invertible in $A$, we have~$\delta_1^{r+1}a = \dfrac{q(a)}{\sep_x^\Delta(P) (a)}$.
\end{remark}

\begin{lemma} \label{lemmacoef}
Let $R$ be a $\Delta$-ring and let $A$ be a $\Delta$-algebra over $R$.
Let $A$ be $\Delta$-generated over $R$ by $\Delta$-algebraic over $R$ elements $a_1, \ldots, a_n$.
Then there exists an~invertible $\mathbb{Z}$-linear change of derivations transforming~$\Delta$ to~$\Delta^\ast$ such that~$a_1,\ldots,a_n$ are~$\Delta^\ast$-integral over~$R$.
\end{lemma}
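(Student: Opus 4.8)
The plan is to pick, for each generator, a minimal annihilating differential polynomial, and then to perform a single invertible linear change of derivations that simultaneously turns each of their leaders into a pure power of $\delta_1^\ast$; the admissible changes will be exactly those whose first column avoids the zero loci of finitely many ``principal symbols'', which is a generic condition. Throughout I work in characteristic zero, as in all applications $\mathbb{Q}\subseteq R$.

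First I would fix, for each $i$, a nonzero $P_i \in R[\Delta^\infty x]$ of least rank (first minimizing the leader in the ordering~\eqref{eq:ord}, then its degree in the leader) with $P_i(a_i) = 0$; such $P_i$ exists since $a_i$ is $\Delta$-algebraic. Writing $\delta^{\mu_i}x$ for its leader and $r_i = |\mu_i|$, minimality together with $\operatorname{char}=0$ forces $\sep_x^\Delta(P_i)(a_i) \neq 0$: otherwise $\sep_x^\Delta(P_i)$ would be a nonzero annihilator of $a_i$ of strictly smaller rank. An invertible $\mathbb{Z}$-linear change of derivations is given by a matrix $B = (b_{jl}) \in \mathrm{GL}_m(\mathbb{Z})$ via $\delta_j = \sum_l b_{jl}\delta_l^\ast$; it induces the substitution $\Phi_B\colon R[\Delta^\infty x]\to R[\Delta^{\ast\infty}x]$ sending $\delta^\alpha x$ to the image of the operator $\prod_j(\sum_l b_{jl}\delta_l^\ast)^{\alpha_j}$ applied to $x$. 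Two features are immediate: $\Phi_B$ maps $\delta^\alpha x$ to an $R$-linear combination of derivatives of order $|\alpha|$, so the maximal order of a derivative occurring in $\Phi_B(P_i)$ is still $r_i$; and $\Phi_B(P_i)(a_i) = P_i(a_i) = 0$, because on $A$ the operator $\sum_l b_{jl}\delta_l^\ast$ equals $\delta_j$.

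The heart of the argument is that the leader and separant of $\Phi_B(P_i)$ are controlled by a single homogeneous polynomial. Let $\xi = (\xi_1,\dots,\xi_m)$ and set $g_i(\xi) := \sum_{|\alpha| = r_i}\frac{\partial P_i}{\partial(\delta^\alpha x)}(a_i)\,\xi^\alpha \in A[\xi]$. Using that the coefficient of $(\delta_1^\ast)^{r_i}x$ in $\Phi_B(\delta^\alpha x)$ equals $\prod_j b_{j1}^{\alpha_j}$ for $|\alpha|=r_i$, a chain-rule computation gives
\[
  \frac{\partial \Phi_B(P_i)}{\partial\bigl((\delta_1^\ast)^{r_i}x\bigr)}(a_i) \;=\; g_i(b_{\cdot 1}),
\]
where $b_{\cdot 1}$ is the first column of $B$. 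Hence, if $g_i(b_{\cdot 1}) \neq 0$, then $(\delta_1^\ast)^{r_i}x$ occurs in $\Phi_B(P_i)$; since no derivative of order exceeding $r_i$ occurs and $(r_i,0,\dots,0)$ is $\grlex$-maximal among multi-indices of degree $r_i$, this derivative is the leader of $\Phi_B(P_i)$, and the displayed identity says exactly that its separant does not vanish at $a_i$. Thus $a_i$ is $\Delta^\ast$-integral as soon as $g_i(b_{\cdot 1}) \neq 0$. Crucially $g_i \neq 0$, since the coefficient of $\xi^{\mu_i}$ is $\sep_x^\Delta(P_i)(a_i) \neq 0$ and distinct multi-indices give distinct monomials.

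It remains to produce one $B \in \mathrm{GL}_m(\mathbb{Z})$ with $g_i(b_{\cdot 1}) \neq 0$ for all $i$ at once. I would choose the first column first: since each $g_i$ is a nonzero polynomial over the $\mathbb{Q}$-vector space $A$, there is a $\mathbb{Q}$-linear functional $\lambda_i\colon A \to \mathbb{Q}$ not killing the coefficient of $\xi^{\mu_i}$, so $\lambda_i\circ g_i \in \mathbb{Q}[\xi]$ is nonzero; then $\prod_i(\lambda_i\circ g_i)$ is a nonzero polynomial over the infinite field $\mathbb{Q}$, hence nonvanishing at some $\xi_0 \in \mathbb{Z}^m$, which forces $g_i(\xi_0)\neq 0$ in $A$ for every $i$. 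Dividing $\xi_0$ by the gcd of its entries preserves all $g_i(\xi_0) \neq 0$ (each $g_i$ is homogeneous and $\operatorname{char}=0$) and makes $\xi_0$ primitive, so it extends to a $\mathbb{Z}$-basis of $\mathbb{Z}^m$ and is the first column of some $B \in \mathrm{GL}_m(\mathbb{Z})$; the associated $\Delta^\ast$ is then as required. I expect the main obstacle to be the chain-rule identity above, which reduces both the ``leader is a pure power of $\delta_1^\ast$'' and the ``separant is nonzero at $a_i$'' requirements to the single inequality $g_i(b_{\cdot 1}) \neq 0$; once this is in place the remaining existence of a suitable integer matrix is routine, the only care being the possible presence of zero divisors in $A$ (handled by the functionals $\lambda_i$) and the integrality and invertibility of the change (handled by primitivity).
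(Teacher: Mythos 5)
Your proposal is correct and takes essentially the same route as the paper: both pick minimal annihilating polynomials (minimality giving $\sep_x^\Delta(P_i)(a_i)\neq 0$), observe that under a linear change of derivations the coefficient of the pure power $(\delta_1^\ast)^{r_i}x$ at $a_i$ is the degree-$r_i$ principal symbol evaluated at the new $\delta_1^\ast$-direction, and then choose an integer point avoiding finitely many zero loci --- the paper just restricts to the shear family $\delta_1^\ast=\delta_1$, $\delta_j^\ast=\delta_j+\lambda_j^\ast\delta_1$ with indeterminate $\lambda_j$ (its $s_i(a_i)$ is exactly your $g_i(1,\lambda_2,\ldots,\lambda_m)$), which makes your primitive-vector/unimodular-completion step unnecessary. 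Your $\mathbb{Q}$-linear functionals also spell out the step the paper leaves implicit, namely that a nonzero polynomial over the (possibly zero-divisor-containing, characteristic-zero) ring $A$ admits a nonvanishing integer specialization.
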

 
\begin{proof}
Fix $1 \leqslant i \leqslant n$.
Since $a_i$ is $\Delta$-algebraic over $R$, there exists nonzero $f_i \in R[\Delta^\infty x]$ such that $f_i(a_i) = 0$.
We will choose this $f_i$ so that its highest (w.r.t.~\eqref{eq:ord}) derivative is minimal and, among such polynomials, the degree is minimal.
We will call such $f_i$ a \emph{minimal} polynomial for~$a_i$.

We introduce variables~$\lambda_2, \ldots, \lambda_m$ algebraically independent over~$A$ and extend the derivations from~$A$ to~$A[\lambda_2,\ldots,\lambda_m]$ by $\delta_i\lambda_j = 0$ for all~$i=1,\ldots,m$ and $j=2,\ldots,m$.
Consider a set of derivations $D := \{ \dd_1, \dd_2, \ldots, \dd_m\}$ defined by 
\[
 \dd_1 := \delta_1, \quad \dd_j := \delta_j + \lambda_i \delta_1 \text{ for } j = 2, \ldots, m.
\]

Consider any $1 \leqslant i \leqslant n$. 
We rewrite $f_i$ in terms of $D$ replacing $\delta_1$ with $\dd_1$ and $\delta_j$ with $\dd_j - \lambda_i \dd_1$ for $j = 2, \ldots, m$.
We denote the order of the highest derivative appearing in $f_i$ by $r_i$ and the partial derivative $\frac{\partial}{\partial (\dd_1^{r_i}x)}f_i$ by $s_i$.
We will show that $s_i(a_i) \neq 0$.
We write
\[
s_i(x) = \dfrac{\partial f_i}{\partial (\dd_1^{r_i} x)} = \sum\limits_{q_1 + \ldots + q_m = r_i} \lambda_2^{q_2}\ldots\lambda_m^{q_m}\dfrac{\partial f_i}{\partial(\delta_1^{q_1}\ldots \delta_m^{q_m}x)}.
\]

Due to the minimality of $f_i$ as a vanishing polynomial of $a$ and the algebraic independence of~$\lambda_j$, the latter expression does not vanish at $x = a_i$.
So, $s_i(a_i) \neq 0$.

Since, for every $1 \leqslant i \leqslant n$, $s_i(a_i)$ is a nonzero polynomial in $\lambda_2, \ldots, \lambda_m$ over $A$, it is possible to choose the values~$\lambda^\ast_2, \ldots, \lambda_m^\ast \in \mathbb{Z} \subset R$ so that neither of~$s_i(a_i)$ vanishes at $(\lambda_2^\ast, \ldots, \lambda_m^\ast)$.
Let $\Delta^\ast = \{\delta_1^\ast,\ldots,\delta_m^\ast\}$ be the result of plugging these values to $D$.
Then we have $\operatorname{sep}_x^{\Delta^\ast}f(a_i) = \dfrac{\partial f_i}{\partial ((\delta_1^{\ast})^{r_i} x)}(a_i) \neq 0$ for every $i = 1, \ldots, n$, so $a_1, \ldots, a_n$ are $\Delta^\ast$-integral over $R$.
\end{proof}

\begin{lemma} \label{lemmafinite}
Let $R$ be a $\Delta$-ring and let $A$ be a $\Delta$-algebra over $R$.
Assume that $A$ is a domain and is $\Delta$-generated by $a_1, \ldots, a_n$ over $R$ which are $\Delta$-integral over $R$.

Then there exists~$a\in A$ such that~$A[1/a]$ is finitely $\Delta_0$-generated over~$R$. 
\end{lemma}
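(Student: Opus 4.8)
The plan is to localize at the product of the separants. Concretely, for each $i$ let $P_i \in R[\Delta^\infty x]$ witness the $\Delta$-integrality of $a_i$: thus $P_i(a_i)=0$, its $\grlex$-highest derivative is $\delta_1^{r_i}x$ for some $r_i$, and $s_i := \sep_x^\Delta(P_i)(a_i) \neq 0$. I set $a := s_1 \cdots s_n$. Since $A$ is a domain, $a \neq 0$ (this is the only place the domain hypothesis enters, and it guarantees $A \hookrightarrow A[1/a]$). In $A[1/a]$ each $s_i$ divides $a$ and is hence invertible, so Remark~\ref{eqremark} applies to every $a_i$ and yields
\[
  \delta_1^{r_i+1}a_i = \frac{q_i(a_i)}{s_i}, \qquad q_i \in R[\delta^\gamma x \mid \gamma <_{\grlex}(r_i+1,0,\ldots,0)].
\]

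Next I would introduce the finite set $S := \{\delta_1^j a_i \mid 1\leqslant i\leqslant n,\ 0\leqslant j\leqslant r_i\}\cup\{1/a\}$ and let $B\subseteq A[1/a]$ be the $R$-subalgebra $\Delta_0$-generated by $S$, i.e.\ generated as an $R$-algebra by all $\Delta_0$-derivatives of the elements of $S$; by construction $B$ is closed under $\Delta_0$. The entire statement then amounts to proving $B = A[1/a]$, since this is exactly the assertion that $A[1/a]$ is finitely $\Delta_0$-generated.

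The key bookkeeping fact is that every derivative $\delta^\gamma x$ occurring in $s_i$ or in $q_i$ satisfies $\gamma_1\leqslant r_i$: for $s_i$ this holds because $\delta_1^{r_i}x$ is the $\grlex$-highest derivative of $P_i$, and for $q_i$ it follows by unwinding $\gamma <_{\grlex}(r_i+1,0,\ldots,0)$. Hence $s_i$ and $q_i(a_i)$ are polynomials in $\{\delta^\gamma a_i \mid \gamma_1\leqslant r_i\}$, and each such $\delta^\gamma a_i=\delta^{(0,\gamma_2,\ldots,\gamma_m)}\delta_1^{\gamma_1}a_i$ is a $\Delta_0$-derivative of an element of $S$, so it lies in $B$. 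In particular $a\in B$ and $\delta_1^{r_i+1}a_i=q_i(a_i)/s_i\in B$. I would then upgrade $B$ to a full $\Delta$-subalgebra by checking that $\delta_1$ sends the elements of $S$ into $B$: for $j<r_i$ one has $\delta_1(\delta_1^j a_i)=\delta_1^{j+1}a_i\in S$, for $j=r_i$ one uses the displayed relation, and $\delta_1(1/a)=-\delta_1(a)/a^2\in B$ because applying $\delta_1$ to $a$ raises the $\delta_1$-order of the derivatives of $a_i$ appearing in $a$ from $\leqslant r_i$ only to $\leqslant r_i+1$, a range already absorbed by $B$. As $\delta_1$ commutes with $\Delta_0$ and $B$ is $\Delta_0$-closed, this is enough for $\delta_1$-closure.

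Once $B$ is a $\Delta$-subalgebra of $A[1/a]$ containing $R$ and the $\Delta$-generators $a_1,\ldots,a_n$, it contains all of $\Delta^\infty\{a_1,\ldots,a_n\}$, hence $A$; since it also contains $1/a$, we get $B=A[1/a]$, so $A[1/a]$ is $\Delta_0$-generated by the finite set $S$. The main obstacle is precisely this passage from $\Delta_0$-closure to $\Delta$-closure: by construction $B$ only sees $\Delta_0$, so the single reduction $\delta_1^{r_i+1}a_i=q_i(a_i)/s_i$ must be propagated to all $\delta_1$-orders. The crux that makes this terminate is the $\grlex$ accounting above, guaranteeing that applying $\delta_1$ to a derivative of $\delta_1$-order $\leqslant r_i$ never produces a derivative of $\delta_1$-order exceeding $r_i+1$, so no infinite cascade of new generators arises. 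The remaining Leibniz computations and the invertibility of the $s_i$ in $A[1/a]$ I expect to be routine.
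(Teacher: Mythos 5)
Your proof is correct and takes essentially the same route as the paper: localize at the separants, use Remark~\ref{eqremark} to rewrite $\delta_1^{r_i+1}a_i$ as $q_i(a_i)/s_i$, and verify $\delta_1$-stability of an explicitly described finitely $\Delta_0$-generated subalgebra via the same $\grlex$ bookkeeping (all derivatives in $s_i$ and $q_i$ having $\delta_1$-order at most $r_i$). The only difference is organizational: you invert the product $s_1\cdots s_n$ of all separants in a single step, whereas the paper runs an induction on the number of generators, inverting one separant per step; the two verifications have identical content.
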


\begin{proof}
We will prove the lemma by induction on the number~$n$ of~$\Delta$-generators of~$A$. If~$n=0$, then~$A=R$ and~$A$ is clearly finitely $\Delta_0$-generated.

Assume that the lemma is proved for all~extensions $\Delta$-generated by less than~$n$ elements.
Applying the induction hypothesis to $\Delta$-algebra~$A_0 := R[\Delta^\infty a_1, \ldots, \Delta^\infty a_{n - 1}]$, we obtain~$b_0 \in A_0$ such that~$A_0[1/b_0]$ is a finitely $\Delta_0$-generated~$R$-algebra.

Since $a_n$ is $\Delta$-integral over $R$, there exists~$P(x)\in R[\Delta^\infty x]$ such that $P(a_n) = 0$, the highest derivative in~$P$ is~$\delta_1^r x$, and~$b_2:=\sep^\Delta_x(P)(a_n) \neq 0$.
We claim that 
\begin{equation}\label{eq:fingen}
A\left[ \frac{1}{b_1b_2}\right] = A_0\left[ \frac{1}{b_1}, \Delta_0^\infty \left(\frac{1}{b_2}\right), \Delta_0^{\infty} (\delta_1^{\leqslant r}a_n)\right],
\end{equation}
where $\delta_1^{\leqslant r} a_n := \{a_n, \delta_1 a_n, \ldots, \delta_n^{r}a_n\}$.
Since $A_0[1/b_1]$ is finitely $\Delta_0$-generated over~$R$, this would imply that~$A[1/(b_1b_2)]$ is finitely $\Delta_0$-generated over~$R$ as~well.

In order to prove~\eqref{eq:fingen}, it is sufficient to show that the images of~$\{\delta_1^{\leqslant r} a_n, 1/b_2\}$ under $\delta_1$ belong to $B := A_0[1/b_1, \Delta_0^\infty (1/b_2), \Delta_0^\infty (\delta_1^{\leqslant r}a_n)]$. 
This is clear for~$\delta_1^{< r}a_n$, so it remains to show that~$\delta_1^{r+1}a_n,  \delta_1(1/b_2) \in B$:
\begin{itemize}
    \item For~$\delta_1^{r+1}a_n$ we use Remark~\ref{eqremark} to write
    \[
    \delta_1^{r+1}a_n = \dfrac{-q(a_n)}{\operatorname{sep}^{\Delta}_x (P) (a_n)} = \dfrac{-q(a_n)}{b_2} \in B, \text{ where } q \in R[\Delta_0^\infty (\delta_1^{\leqslant r}x)].
    \]
    \item For~$\delta_1(1/b_2)$, we observe that
\[
\delta_1 \left(\dfrac{1}{b_2}\right) \in \dfrac{1}{b_2^2}A_0[\Delta_0^\infty (\delta_1^{\leqslant r} a_n)] \subseteq B.\qedhere
\]
\end{itemize}
\end{proof}

%%%%%%%%%%%%%%%%%%%%%%%%%%%%%%%%%%%

%%%%%%%%%%%%%%%%%%%%%%%%%%%%%%%%%%%%%%%%%%%%%%%%%%%%%%

\subsection{Proof of Seidenberg's Theorem}\label{sec:seidenberg}

\begin{lemma} \label{lemmamer}
Let~$W\subseteq \mathbb{C}^m$ be a domain and~$K$ be a countably $\Delta$-generated subfield of~$\Mer_m(W)$. 
Then there exist~$c\in \mathbb{C}$ and a~domain~$V \subseteq W \cap \{z_1 = c\}$ such that, for every $f \in K$, $f|_{\{z_1 = c\}}$ is a well-defined element of $\Mer_{m - 1}(V)$ and, therefore, the restriction to $\{z_1 = c\}$ defines a $\Delta_0$-embedding $K \to \Mer_{m - 1}(V)$.
\end{lemma}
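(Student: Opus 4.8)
The plan is to realize the desired map as restriction of meromorphic functions to a suitably chosen hyperplane $\{z_1 = c\}$, exploiting that such restriction commutes with $\delta_2, \ldots, \delta_m$ (but not $\delta_1$, which is why only a $\Delta_0$-embedding is claimed) and that a unital ring homomorphism out of a field is automatically injective. The difficulty is entirely in arranging that \emph{every} element of the uncountable field $K$ has a well-defined restriction, and I would handle this by passing to a countable generating ring and then to its fraction field. Concretely, fix a countable set $S$ of $\Delta$-generators of $K$ over $\mathbb{Q}$ and set $R := \mathbb{Q}[\Delta^\infty S] \subseteq \Mer_m(W)$, so that $K = \Frac(R)$ and $R$ is a countable set.

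Each nonzero $r \in R$ is a nonzero meromorphic function on $W$, so its pole set $P_r$ and its zero set $Z_r$ are thin subsets of $W$. Hence $B := \bigcup_{0 \neq r \in R} \bigl(P_r \cup Z_r\bigr)$ is a countable union of thin sets and therefore, being a countable union of measure-zero sets, cannot cover the nonempty open set $W$. I would pick $p \in W \setminus B$, set $c := p_1$, and let $V$ be the connected component of $\{z_1 = c\} \cap W$ containing $p$. By construction every nonzero $r \in R$ is holomorphic and nonvanishing at $p \in V$.

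Next I would verify that each $r \in R$ restricts to a well-defined element of $\Mer_{m-1}(V)$. Since $p \in V$ lies off $P_r$, the hyperplane $\{z_1 = c\}$ is not contained in $P_r$, so $P_r \cap V$ is a thin subset of the connected set $V$. Working near a point $z_0 \in V$ and writing $r = g/h$ with $g, h$ coprime in the local ring $\mathcal{O}_{m, z_0}$ (a UFD), the factor $z_1 - c$ cannot divide $h$ — otherwise $\{z_1 = c\}$ would lie in $P_r$ near $z_0$ — so $h|_V \not\equiv 0$ and $g|_V/h|_V$ is a local meromorphic representation of $r|_V$; these glue to $r|_V \in \Mer_{m-1}(V)$, with $r|_V \neq 0$ because $r(p) \neq 0$. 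Thus restriction is a ring homomorphism on $R$ sending nonzero elements to nonzero elements, and it extends uniquely to $\rho\colon K = \Frac(R) \to \Mer_{m-1}(V)$ by $\rho(a/b) = a|_V / b|_V$; this is exactly $f|_{\{z_1 = c\}}$ and is independent of the chosen representation since $b|_V \neq 0$ for $0 \neq b \in R$.

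Finally, $\rho$ is a $\Delta_0$-homomorphism because $\delta_2, \ldots, \delta_m$ differentiate along the coordinates $z_2, \ldots, z_m$ intrinsic to the slice $\{z_1 = c\}$ and therefore commute with restriction to it, and $\rho$ is injective because it is a unital ring homomorphism out of the field $K$, whose kernel is a proper ideal, hence zero. This yields the required $\Delta_0$-embedding $K \to \Mer_{m-1}(V)$. I expect the main obstacle to be the analytic step of the third paragraph: confirming that the restriction genuinely lands in $\Mer_{m-1}(V)$, and not merely in the ring of functions holomorphic off a thin set, which is precisely what forces the use of coprime local representations together with the fact that $\{z_1 = c\}$ is not contained in the pole set.
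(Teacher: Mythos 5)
Your proposal is correct and takes essentially the same route as the paper: choose a point of $W$ avoiding the countable union of thin singular sets (the paper uses a Baire-category/meagreness argument where you use measure zero, an immaterial difference) and restrict to the hyperplane $\{z_1=c\}$ through it, noting that restriction commutes with $\delta_2,\ldots,\delta_m$ and that a homomorphism out of a field is injective. If anything, you are more careful than the paper at the one delicate step: by avoiding the zero sets as well as the pole sets of all nonzero elements of the countable ring $R=\mathbb{Q}[\Delta^\infty S]$, you ensure that denominators of elements of $K=\Frac(R)$ do not vanish at the base point (hence not identically on $V$), whereas the paper only avoids the singularities of the generators $b_i$ and passes over this point with the remark that every $f\in K$ is a rational function in the $b_i$'s and their derivatives.
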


\begin{proof}
  Let~$K$ be $\Delta$-generated by~$\{b_i\}_{i=1}^\infty$. 
  For every $i \geqslant 0$, denote by~$S_i$ the set of singularities of~$b_i$. 
  By definition, $S_i$ is a nowhere dense subset of~$\mathbb{C}^m$. 
  Therefore, the union~$S = \bigcup\limits_{i=1}^{\infty} S_i$ is~a~meagre set. 
  As~$W$ is a domain in~$\mathbb{C}^m$, the difference~$W \setminus S$ is non-empty.
  Choose any point $(w_1, \ldots, w_m) \in W \setminus S$. 
  Then all the restrictions of~$b_i$ to~$\{z_1 = w_1\}$ are holomorphic at~$(w_2,\ldots,w_m)$ and meromorphic in some vicinity~$V \subseteq W\cap \{t_1 = w_1\}$ of $(w_2,\ldots,w_m)$.
  Since every element $f \in K$ is a rational function in $b_i$'s and their partial derivatives, its restriction to $\{z_1 = w_1\}$ is also a well-defined meromorphic function on $V$.
\end{proof}

\begin{lemma}\label{lemmatransc}
Let $U \subseteq \mathbb{C}^m$ be a domain.
For every countably $\Delta$-generated $\Delta$-field $K \subseteq \Mer_m(U)$,
$\operatorname{difftrdeg}_K^\Delta \Mer_m(U)$ is infinite.
\end{lemma}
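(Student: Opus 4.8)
The goal is to show that $\operatorname{difftrdeg}_K^\Delta \Mer_m(U)$ is infinite for any countably $\Delta$-generated $\Delta$-subfield $K \subseteq \Mer_m(U)$. The plan is to exhibit, for every positive integer $N$, a set of $N$ elements of $\Mer_m(U)$ that is $\Delta$-independent over $K$. Since $K$ is countably $\Delta$-generated, its full set of derivatives $\Delta^\infty$ of the generators is countable, so $K$ itself has countable transcendence degree over $\mathbb{Q}$ (and is a countable field if the base is, say, $\overline{\mathbb{Q}}$; in general $K$ has at most countably many elements needed to build any fixed finite configuration). The key point I want to exploit is a cardinality mismatch: $\Mer_m(U)$ is ``very large'' (it has uncountable transcendence degree over any countable subfield), whereas any finite-dimensional differential-algebraic constraint over $K$ only involves countably much data.

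First I would try the cleanest route: a direct cardinality argument. Suppose for contradiction that $\operatorname{difftrdeg}_K^\Delta \Mer_m(U) = d < \infty$. Fix a $\Delta$-transcendence basis $h_1, \ldots, h_d$. Then every element of $\Mer_m(U)$ is $\Delta$-algebraic over the field $K\langle h_1, \ldots, h_d\rangle$ obtained by adjoining the $h_i$ differentially. I would argue that this field has only countable transcendence degree over $\mathbb{Q}$ (countably many generators, hence a countable $\Delta^\infty$-generating set, hence countable transcendence degree), and that being $\Delta$-algebraic over it forces membership in the algebraic closure of a countably generated field. The contradiction I want is that $\Mer_m(U)$ cannot have countable transcendence degree over a countable subfield: it contains continuum-many algebraically independent functions. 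A convenient witness is a family such as $\{e^{\lambda z_1}\}_{\lambda \in \mathbb{C}}$, or translates/exponentials chosen so that no polynomial relation over a countable field can hold among them.

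The main obstacle I anticipate is making the phrase ``$\Delta$-algebraic over $K\langle h_1,\dots,h_d\rangle$ implies it lies in the algebraic closure of a countably generated subfield, hence the whole field has countable transcendence degree'' fully rigorous. The subtlety is that $\Delta$-algebraicity controls each element and all its derivatives simultaneously, but one must check the bookkeeping: the $\Delta$-field $K\langle h_1,\ldots,h_d\rangle$ is generated over $\mathbb{Q}$ as an ordinary field by the countable set $\Delta^\infty(\text{generators of } K) \cup \Delta^\infty\{h_1,\ldots,h_d\}$, so its transcendence degree over $\mathbb{Q}$ is countable; and if $u$ is $\Delta$-algebraic over it, then $u$ (being a single element) is algebraic over a field of countable transcendence degree, so adjoining $u$ keeps the transcendence degree countable. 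Thus $\Mer_m(U)$ would have countable transcendence degree over $\mathbb{Q}$, contradicting the existence of uncountably many algebraically independent meromorphic functions on $U$.

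The final step, producing a continuum-sized algebraically independent family in $\Mer_m(U)$, is where I would spend a little genuine care rather than hand-waving. The functions $z_1 \mapsto e^{\lambda z_1}$ for $\lambda$ ranging over a $\mathbb{Q}$-linearly independent (in fact algebraically generic) uncountable set are a standard candidate, since algebraic relations among exponentials are heavily constrained; alternatively, one can invoke that the transcendence degree of the field of meromorphic functions on any open $U \subseteq \mathbb{C}^m$ over $\mathbb{C}$ is uncountable (indeed $2^{\aleph_0}$). Once such an uncountable algebraically independent family is in hand, the cardinality contradiction closes the argument and the lemma follows. I expect the write-up to be short, with the only real content being the two cardinality estimates and the choice of witness family.
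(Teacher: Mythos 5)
There is a genuine gap, and it sits exactly at the step you flagged as the one needing care: the claim that if $u$ is $\Delta$-algebraic over the countably generated field $K\langle h_1,\ldots,h_d\rangle$, then $u$ \emph{itself} is algebraic over a field of countable transcendence degree. This conflates $\Delta$-algebraicity with ordinary algebraicity, and it is false. $\Delta$-algebraicity of $u$ only says that \emph{some finite set of derivatives of $u$} satisfies a polynomial relation; it puts no bound at all on the field-theoretic transcendence degree of $u$ over the base. The simplest counterexample is the constants: every $c \in \mathbb{C} \subseteq \Mer_m(U)$ satisfies $\delta_i c = 0$, hence is $\Delta$-algebraic over $\mathbb{Q}$, yet $\mathbb{C}$ has transcendence degree $2^{\aleph_0}$ over $\mathbb{Q}$. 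Worse for your argument, your own proposed witness family refutes your bridging step: each $e^{\lambda z_1}$ satisfies $u\,\delta_1^2 u = (\delta_1 u)^2$, a relation over $\mathbb{Q}$, so all of these functions are $\Delta$-algebraic over $\mathbb{Q}$ while being uncountably many algebraically independent elements. Consequently, a field can perfectly well be $\Delta$-algebraic over a countably generated $\Delta$-subfield and still have uncountable transcendence degree over $\mathbb{Q}$; no pure cardinality count on the functions themselves can yield the desired contradiction, and a priori nothing you wrote rules out $\operatorname{difftrdeg}_K^\Delta \Mer_m(U) < \infty$.

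Your cardinality intuition is salvageable, but it must be applied one level down, to \emph{values} rather than functions --- and this is precisely what the paper does. The paper's proof fixes a point $c \in U$ and lets $F \subseteq \mathbb{C}$ be the (countably generated) field of values at $c$ of the elements of $L := K\langle \tau_1,\ldots,\tau_l\rangle$ holomorphic at $c$. A $\Delta$-algebraic relation for $f$ over $L$ specializes at $c$ to an algebraic relation over $F$ among the numbers $(\delta^\alpha f)(c)$, i.e., among the Taylor coefficients of $f$ at $c$. Since $\operatorname{trdeg}_F \mathbb{C}$ is infinite, one can choose $f$ holomorphic at $c$ (e.g., a power series with Taylor coefficients algebraically independent over $F$) whose derivative values at $c$ admit no such relation; this $f$ is then $\Delta$-transcendental over $L$, contradicting the assumed finiteness of the $\Delta$-transcendence degree. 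So the correct use of the countable-versus-continuum mismatch is between $F$ and $\mathbb{C}$ at a single point, where $\Delta$-algebraicity genuinely does impose an algebraic constraint; to repair your write-up you would replace your step ``$\Delta$-algebraic $\Rightarrow$ algebraic over a countable field'' by this evaluation argument.
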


\begin{proof}
Suppose~$\operatorname{difftrdeg}_K^\Delta \mathcal{M}er_m(U) = l < \infty$. Let~$\tau_1,\ldots,\tau_l$ be a~$\Delta$-transcendence basis of~$\mathcal{M}er_m(U)$ over~$K$. Let~$L$ be a field~$\Delta$-generated by~$K$ and~$\tau_1,\ldots,\tau_l$. Note that~$L$ is still at most countably~$\Delta$-generated and~$\mathcal{M}er_m(U)$ is $\Delta$-algebraic over~$L$. Choose an arbitrary point~$c \in U$ and denote by~$F$ a subfield of~$\mathbb{C}$ generated by the values at~$c$ of those elements of~$L$ that are holomorphic at~$c$. Clearly~$F$ is at most countably generated and the transcendence degree of~$\mathbb{C}$ over~$F$ is infinite. Now any function in~$\mathcal{M}er_m(U)$ that is holomorphic at~$c$ and such that the set of values of its derivatives at~$c$ is~transcendental over~$F$ is~$\Delta$-transcendental over~$L$, which contradicts to the assumption that~$\mathcal{M}er_m(U)$ is $\Delta$-algebraic over~$L$.  
\end{proof}

\begin{notation}
  Let~$A$ be a~$\Delta$-algebra without zero divisors. 
  By~$\operatorname{Frac}(A)$ we denote the field of fractions of~$A$.
\end{notation}

We are now ready to prove Seidenberg's theorem.

\setcounter{theorem}{0}

\begin{theorem}[Seidenberg's embedding theorem]\label{main}
Let $W \subseteq \mathbb{C}^m$ be a domain and
let~$K \subseteq \mathcal{M}er_m(W)$ be at most countably $\Delta$-generated~$\Delta$-field (over $\mathbb{Q}$).
Let $L \supset K$ be a $\Delta$-field finitely $\Delta$-generated over~$K$. 

Then there exists a domain $U \subseteq W$ and a~$\Delta$-embedding~$f\colon L \rightarrow \mathcal{M}er_m(U)$ over $K$. 
\end{theorem}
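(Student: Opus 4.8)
The plan is to proceed by induction on the number $m$ of derivations. The base case $m=0$ is trivial: the action of $\Delta$ is empty, $K\subseteq\Mer_0(W)=\mathbb{C}$, and $L$ is a finitely generated field extension of $K$, hence embeddable into $\mathbb{C}=\Mer_0(U)$ by transcendence-degree counting (the transcendence degree of $\mathbb{C}$ over a countable field is infinite). For the inductive step, I would first reduce to the case where $L$ is $\Delta$-generated by $\Delta$-algebraic elements, handling the $\Delta$-transcendental generators separately: if $L$ has positive $\Delta$-transcendence degree over $K$, pick a $\Delta$-transcendence basis and map its elements to $\Delta$-algebraically independent elements of $\Mer_m(W)$, which exist in abundance by Lemma~\ref{lemmatransc}. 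This enlarges $K$ while keeping it countably $\Delta$-generated and reduces the problem to the case $\operatorname{difftrdeg}_K^\Delta L = 0$.

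Now assume $L$ is $\Delta$-algebraic over $K$, generated by finitely many $\Delta$-algebraic elements $a_1,\ldots,a_n$. Here I would invoke Lemma~\ref{lemmacoef} to perform an invertible $\mathbb{Z}$-linear change of derivations $\Delta\to\Delta^\ast$ making all $a_i$ simultaneously $\Delta^\ast$-integral over $K$; such a change of derivations corresponds to an invertible linear change of coordinates on $\mathbb{C}^m$, under which $\Mer_m(W)$ is carried to $\Mer_m$ of the image domain, so it is harmless for the embedding we seek. Working with the $\Delta^\ast$-integral generators, consider the $\Delta^\ast$-subalgebra $A\subseteq L$ they generate over $K$; since $L=\Frac(A)$ and $A$ is a domain, Lemma~\ref{lemmafinite} yields an element $a\in A$ such that $A[1/a]$ is finitely $\Delta_0^\ast$-generated over $K$, where $\Delta_0^\ast=\Delta^\ast\setminus\{\delta_1^\ast\}$.

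The heart of the argument is then to restrict to a hyperplane $\{z_1=c\}$ and apply the induction hypothesis in $m-1$ derivations, afterwards reconstructing the full solution via Cauchy-Kovalevskaya. Concretely, I would use Lemma~\ref{lemmamer} to choose $c\in\mathbb{C}$ and a domain $V\subseteq W\cap\{z_1=c\}$ so that restriction gives a $\Delta_0^\ast$-embedding $K\to\Mer_{m-1}(V)$; the induction hypothesis applied to the finitely $\Delta_0^\ast$-generated field $\Frac(A[1/a])$ over (the restriction of) $K$ then produces a $\Delta_0^\ast$-embedding into $\Mer_{m-1}$ of some subdomain. Composing with a Taylor homomorphism (Definition~\ref{deftaylor}) in the variable $t_1$ lifts these boundary data to a formal power-series solution of the $\delta_1^\ast$-direction PDE recorded in Remark~\ref{eqremark}, and the Cauchy-Kovalevskaya theorem guarantees that these formal series converge to genuine holomorphic functions in a neighbourhood, because the $\Delta^\ast$-integrality relation $\sep_x^{\Delta^\ast}(P)(a_i)\neq 0$ is exactly the non-degeneracy needed to solve for the top $\delta_1^\ast$-derivative. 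The main obstacle I anticipate is bookkeeping: ensuring the hyperplane restriction of $K$ remains a genuine $\Delta_0^\ast$-field into which everything embeds compatibly, that the separants and the element $a$ do not vanish identically on the chosen slice (which is why the meagre-set argument in Lemma~\ref{lemmamer} and the freedom to move $c$ are essential), and that the resulting holomorphic functions can be assembled into a single $\Delta^\ast$-embedding of all of $L$ and not merely of $A[1/a]$ — the passage from the finitely $\Delta$-generated algebra back to its fraction field, and the injectivity of the final homomorphism, will require care.
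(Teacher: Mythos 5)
Your plan is correct and follows essentially the same route as the paper's own proof: induction on $m$ with the $\Delta$-transcendental generators absorbed into $K$ via Lemma~\ref{lemmatransc}, the change of derivations from Lemma~\ref{lemmacoef} (undone by a linear coordinate change on $\mathbb{C}^m$), Lemma~\ref{lemmafinite} to make $A[1/a]$ finitely $\Delta_0$-generated, the hyperplane restriction of Lemma~\ref{lemmamer} feeding the induction hypothesis applied to $\Frac(A[1/a])=L$, and finally a Taylor homomorphism plus Cauchy--Kovalevskaya to lift the $(m-1)$-variable embedding. The bookkeeping issues you flag are resolved in the paper exactly as you anticipate: the separant nonvanishing is built into Remark~\ref{eqremark}, the point $w$ is chosen so that all $h(a_i)$ and $g_i(a_i)$ are holomorphic there, and injectivity of the final map follows from injectivity of the inductively obtained embedding $h$, which then extends from $A$ to $L$.
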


\begin{proof}
We will first reduce the theorem to the case when $L$ is $\Delta$-algebraic over $K$.
Assume that it is not, and let $u_1, \ldots, u_\ell$ be a $\Delta$-transcendence basis of $L$ over $K$.
Lemma~\ref{lemmatransc} implies that there exist functions $f_1, \ldots, f_\ell \in \Mer_m(W)$ $\Delta$-transcendental over $K$.
Let $\Tilde{K}$ be a $\Delta$-field generated by $K$ and $f_1, \ldots, f_\ell$.
The embedding $K \to L$ can be extended to an embedding $\Tilde{K} \to L$ by sending $f_i$ to $u_i$ for every $1\leqslant i \leqslant \ell$.
Therefore, by replacing $K$ with $\Tilde{K}$ we will further assume that $L$ is $\Delta$-algebraic over $K$.

We~will prove the theorem by induction on~the number~$m$ of~derivations.
If~$m=0$, then~$L$ can be embedded into~$\mathbb{C}$ by \cite[Chapter V, Theorem 2.8]{Lang}.

Suppose~$m > 0$. 
Let~$a_1, \ldots, a_n$ be a set of $\Delta$-generators of $L$ over $K$.
Let $A := K[\Delta^\infty a_1, \ldots, \Delta^\infty a_n]$.
Since~$A$ is $\Delta$-algebraic over $K$, by Lemma~\ref{lemmacoef}, there exist and invertible $m\times m$ matrix $M$ over $\mathbb{Q}$ such that, for a new set of derivations 
\[
\Delta^\ast = \{\delta_1^\ast, \ldots, \delta_m^\ast\} := M\Delta,
\]
$a_1,\ldots,a_n$ are~$\Delta^\ast$-integral over~$K$.
Due to the invertibility of $M$, every $\Delta^\ast$-embedding $L \to \Mer_m(U)$ over $K$ yields a $\Delta$-embedding.
Therefore, by changing the coordinates in the space $\mathbb{C}^m$ from $(z_1, \ldots, z_m)$ to $M^{-1}(z_1, \ldots, z_m)$, we can further assume that $\Delta = \Delta^\ast$, so $a_1, \ldots, a_n$ are $\Delta$-integral over $K$.

Lemma \ref{lemmafinite} implies that there exists~$a \in A$ such that~$B:= A[a^{-1}]$ is finitely $\Delta_0$-generated. 
By $\Delta$-integrality of~$a_1,\ldots,a_n$ and Remark~\ref{eqremark}, for every $1 \leqslant i \leqslant n$, there exists a positive integer $r_i$ and a rational function $g_i \in K(\delta^\alpha y \mid \alpha <_{\grlex} (r, 0, \ldots, 0))$ such that $a_i$ satisfies
\begin{equation}\label{sys1}
    \delta^{r_i} a_i = g_i(a_i).
\end{equation}

Since $K$ is at most countably $\Delta$-generated,
Lemma \ref{lemmamer} implies that there exist~$w_1 \in \mathbb{C}$ and~$V\subseteq W \cap \{z_1 = w_1\} \subseteq \mathbb{C}^{m-1}$ such that the restriction to $\{z_1 = w_1\}$ induces a $\Delta_0$-embedding~$\rho\colon K \to \Mer_{m-1}(V)$.
We apply the induction hypothesis to $\Delta_0$-fields $\rho(K)$ and $\Frac(B) = L$.
This yields $\Delta_0$-embedding~$h: L \rightarrow \Mer_{m - 1} (\widetilde{V})$ for some~$\widetilde{V} \subseteq V$. 

Choose a point~$v = (w_2,\ldots,w_m)\in \widetilde{V}$ such that all the $h(a_i)$ are holomorphic at~$v$ and all the~$g_i(a_i)$ are holomorphic at~$w = (w_1,w_2,\ldots,w_m) \in W$. 
Consider the Taylor homomorphism $T_{h, w}\colon A\rightarrow \Mer_{m-1}(\widetilde{V})[[z_1]]$ defined as follows (see Definition~\ref{deftaylor}):
\[
  T_{h, w}(a) := \sum\limits_{k=0}^\infty h(\delta_1^k a)\dfrac{(z_1-w_1)^k}{k!} \quad\text{ for every }a \in A.
\]
Note that~$T_{h, w}$ is a~$\Delta$-homomorphism.

Fix $1 \leqslant i \leqslant n$. 
Since $a_1$ is a solution of~\eqref{sys1} and $T_{h, w}$ is a $\Delta$-homomorphism, $T_{h, w}$
is a formal power series solution of~$\delta_1^{r_i}y = g_i(y)$ corresponding to holomorphic initial conditions
\[
  y|_{z_1 = w_1} = h(a_i),\; (\delta_1 y)|_{z_1 = w_1} = h(\delta_1 a_i),\; \ldots, \; (\delta_1^{r_i - 1} y)|_{z_1 = w_1} = h(\delta_1^{r_i - 1} a_i).
\]
By the Cauchy-Kovalevskaya theorem, this solution is holomorphic in some vicinity~$U_i$ of~$w$.
We set $U := \bigcap_{i = 1}^n U_i$.
Thus,~$T_{h, w}$ induces a non-trivial~$\Delta$-homomorphism from~$A$ to~$\Mer_m(U)$. 
Since~$h$ is injective,~$T_{h, w}$ is also injective, so it can be extended to a $\Delta$-embedding $L \to \Mer_m(U)$ over $K$.

\end{proof}

%%%%%%%%%%%%%%%%%%%%%%%%%%%%%%%%%%%%%%%%%%%%%%%%%%%%%%%%%%%%%%%

%%%%%%%%%%%%%%%%%%%%%%%%%%%%%%%%%%%%%%%%%%%%%%%%%%%%%%%%%%%%%%%

\subsection{Proof of Ritt's theorem}\label{sec:ritt}

\begin{definition}[Differentially simple rings]
A~$\Delta$-ring~$R$ is called~\emph{$\Delta$-simple} if it contains no proper~$\Delta$-ideals.
\end{definition}

\begin{lemma} \label{lemmasimple}
Let~$A$ be a $\Delta$-simple $\Delta$-algebra $\Delta$-generated by~$a_1,\ldots,a_n$ over a $\Delta$-field K. 
Then $A$ does not contain zero divisors.

Furthermore, assume that there exists an integer $\ell$ such that
\begin{itemize}
    \item $a_1, \ldots, a_\ell$ form a $\Delta$-transcendence basis of $A$ over $K$;
    \item $a_{\ell + 1}, \ldots, a_n$ are $\Delta$-integral over $K[\Delta^\infty a_1, \ldots, \Delta^\infty a_\ell]$. 
\end{itemize}
Then~$A$ has finite $\Delta_0$-transcendence degree over~$K$.
In particular, $\ell = 0$.
\end{lemma}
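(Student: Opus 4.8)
The first assertion is the classical fact that a $\Delta$-simple ring of characteristic zero (which is our situation, since $K \subseteq \Mer_m(W)$ contains $\mathbb{Q}$) is a domain, and the plan is to prove it in two steps. First I would show $A$ is reduced: if $a^N = 0$, then differentiating the relation repeatedly and dividing by the invertible integer coefficients yields $(\delta_i a)^{2N-1} = 0$, so the nilradical is a $\Delta$-ideal; being proper, it must vanish by $\Delta$-simplicity. Second, I would show that every minimal prime $\mathfrak p$ of the now-reduced ring $A$ is a $\Delta$-ideal: the localization $A_{\mathfrak p}$ has a unique prime ideal, which is zero by reducedness, so $A_{\mathfrak p}$ is a field, the derivations extend to it, and the quotient-rule computation for $x \in \mathfrak p$ (where $x/1 = 0$) forces $\delta_i x \in \mathfrak p$. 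A minimal prime is proper, hence zero by $\Delta$-simplicity, so $(0)$ is prime and $A$ is a domain.

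For the second assertion, set $A_0 := K[\Delta^\infty a_1, \ldots, \Delta^\infty a_\ell]$. Then $A$ is $\Delta$-generated over $A_0$ by $a_{\ell+1}, \ldots, a_n$, which are $\Delta$-integral over $A_0$ by hypothesis, and $A$ is a domain by the first part, so Lemma~\ref{lemmafinite} applied over $A_0$ produces $a \in A$ with $A[1/a]$ finitely $\Delta_0$-generated over $A_0$. Since passing to a localization does not change $\Delta_0$-transcendence degree, the tower $K \subseteq A_0 \subseteq A[1/a]$ shows that $\operatorname{difftrdeg}^{\Delta_0}_K A$ and $\operatorname{difftrdeg}^{\Delta_0}_K A_0$ differ by a finite amount. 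As $A_0$ is $\Delta$-free on $a_1, \ldots, a_\ell$, the family $\{\delta_1^k a_1\}_{k \geqslant 0}$ is $\Delta_0$-independent and infinite as soon as $\ell \geqslant 1$; conversely, if $\ell = 0$ then $A_0 = K$ and finite $\Delta_0$-generation of $A[1/a]$ bounds the degree. Hence $\operatorname{difftrdeg}^{\Delta_0}_K A$ is finite \emph{if and only if} $\ell = 0$, and both conclusions of the lemma reduce to proving $\ell = 0$.

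To force $\ell = 0$ I would argue by contradiction, exploiting $\Delta$-simplicity through the remark that every $\Delta$-homomorphism from $A$ into a nonzero ring is injective (its kernel is a proper $\Delta$-ideal, hence zero). Assuming $\ell \geqslant 1$, I would build a \emph{non-injective} $\Delta$-homomorphism. Since $A_0$ is $\Delta$-free, specializing $a_1, \ldots, a_\ell$ to elements of $K$ (for instance sending $a_1 \mapsto 0$) gives a $\Delta$-homomorphism $\psi \colon A_0 \to K$ with $a_1 \in \ker\psi \neq 0$. I would then extend $\psi$ across the $\Delta$-integral generators by solving their defining PDEs $\delta_1^{r_i} a_i = g_i(a_i)$ (Remark~\ref{eqremark}), with specialized coefficients, via the Cauchy–Kovalevskaya theorem and the already established embedding of Theorem~\ref{main}, exactly as in the proof of Seidenberg's theorem. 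The resulting $\Delta$-homomorphism $\phi \colon A \to \Mer_m(U)$ satisfies $a_1 \in \ker\phi$, so $\ker\phi$ is a nonzero proper $\Delta$-ideal, contradicting $\Delta$-simplicity.

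The main obstacle is this extension step. After specializing $a_1, \ldots, a_\ell$, the coefficients of the integral equations for $a_{\ell+1}, \ldots, a_n$ change, and solvability of these PDEs requires the corresponding separants to stay nonzero under the specialization. The specialization must therefore be chosen generically among those still producing a nonzero kernel, so that none of the finitely many separants is killed while a nontrivial relation survives; controlling this simultaneously for all generators is the delicate point. A purely algebraic alternative is to take a nonzero prime $\Delta$-ideal $\mathfrak q_0 \subsetneq A_0$ and show its extension $\mathfrak q_0 A$ remains proper by reducing, after inverting the relevant separants, to an ordinary integral extension and invoking lying-over; there the same genericity requirement reappears as the condition that $\mathfrak q_0$ avoid the separant loci.
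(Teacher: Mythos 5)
Your first part (domain-ness via the nilradical being a $\Delta$-ideal and minimal primes being $\Delta$-ideals) is a correct classical alternative to the paper's argument, which instead embeds $A$ into $F[[z_1,\ldots,z_m]]$ via a Taylor homomorphism; and your reduction of the whole lemma to proving $\ell = 0$ agrees with the paper. But the heart of the lemma is forcing $\ell = 0$, and there your proof stops exactly where the work begins: you specialize $a_1 \mapsto 0$ $\Delta$-compatibly on the free algebra $A_0$ and propose to extend across the $\Delta$-integral generators via Cauchy--Kovalevskaya and Theorem~\ref{main}, admitting the surviving-separant issue as ``the delicate point'' without resolving it. This is a genuine gap, and not merely a technical one. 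First, Cauchy--Kovalevskaya produces solutions of the $n-\ell$ integral equations, but a homomorphism $A \to \Mer_m(U)$ must respect \emph{all} relations of $A$, which those equations do not control; in the proof of Seidenberg's theorem this is handled because the initial data come from a $\Delta_0$-embedding of the entire field supplied by the induction hypothesis, which is unavailable to you since your $\psi$ is non-injective by design. Second, and more fundamentally, a specialization that commutes with all of $\Delta$ and kills $a_1$ can never extend to a $\Delta$-homomorphism on $A$ with nonzero image: by $\Delta$-simplicity, the $\Delta$-ideal of $A$ generated by $a_1$ is all of $A$. A proof by contradiction must still rest on a construction that actually succeeds, and any route insisting on full $\Delta$-compatibility of the specialization is structurally doomed, not just delicate; your ``purely algebraic alternative'' via lying-over founders on the same unresolved genericity condition, as you note yourself.

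The mechanism the paper uses --- and which your sketch is missing --- is to drop $\delta_1$-compatibility of the specialization and restore it afterwards with a Taylor homomorphism. After Lemma~\ref{lemmafinite} gives $A[1/b] = R[\Delta_0^\infty b_1, \ldots, \Delta_0^\infty b_s]$ with $R = K[\Delta^\infty a_1, \ldots, \Delta^\infty a_\ell]$, one compares $\Delta_0$-transcendence degrees of the algebras $B_j := K[\Delta_0^\infty(\delta_1^{<j}a_1), \ldots, \Delta_0^\infty(\delta_1^{<j}a_\ell), \Delta_0^\infty b_1, \ldots, \Delta_0^\infty b_s]$: the bounds $j\ell \leqslant \operatorname{difftrdeg}^{\Delta_0}_K B_j \leqslant j\ell + s$ force the existence of $N$ such that for all $j > N$ the derivatives $\delta_1^{j}a_1, \ldots, \delta_1^{j}a_\ell$ are $\Delta_0$-independent over $B_j$. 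Consequently any nonzero $\Delta_0$-homomorphism $B_N \to L$ extends to a ring homomorphism $\varphi \colon A[1/b] \to L$ sending $\delta_1^j a_i \mapsto 0$ for all $j > N$ --- no separants, no genericity, no PDEs, precisely because the elements being killed are algebraically free over everything already specialized. The Taylor homomorphism $T_{\varphi,0}$ in the single variable $z_1$ (Definition~\ref{deftaylor}) then upgrades the merely $\Delta_0$-compatible $\varphi$ to a genuine $\Delta$-homomorphism, whose kernel contains $\delta_1^{N+1}a_1 \neq 0$ but not $1$, contradicting $\Delta$-simplicity of $A[1/b]$. So the fix for your delicate point is not a generic choice of specialization but the combination of a counting argument that locates freely specializable high $\delta_1$-derivatives with the observation that only $\Delta_0$-compatibility is needed before applying Taylor.
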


\begin{proof}
Consider any non-zero (not necessarily differential) homomorphism~$\psi : A \rightarrow F$ ($F\supseteq K$ is a field) and the corresponding Taylor homomorphism~$T_{\psi, 0}\colon A \rightarrow F[[z_1,\ldots,z_m]]$, which is a~$\Delta$-homomorphism. 
Since~$A$ is~$\Delta$-simple, the kernel of~$T_{\psi, 0}$ is zero. Therefore,~$T_{\psi, 0}$ is a~$\Delta$-embedding of~$A$ into~$F[[z_1, \ldots, z_m]]$. 
Since~the latter does not contain zero divisors, the same is true for~$A$. 

Assume that~$A$ has infinite $\Delta_0$-transcendence degree, that is, $\ell > 0$. 
Since $a_{\ell + 1}, \ldots, a_n$ are $\Delta$-integral over $R := K[\Delta^\infty a_1, \ldots, \Delta^\infty a_\ell]$, Lemma~\ref{lemmafinite} implies that there exists an element~$b\in A$ such that~$A_0 := A[1/b]$ is a finitely $\Delta_0$-generated algebra over~$R$. 
Note that~$A_0$ is also $\Delta$-simple.
Let $A_0 = R[\Delta_0^\infty b_1, \ldots, \Delta_0^\infty b_s]$.
For every $j \geqslant 0$, consider $\Delta_0$-algebra
\[
  B_j := K[\Delta_0^\infty (\delta_1^{(<j)}a_1), \ldots, \Delta_0^\infty  (\delta_1^{(<j)}a_l), \Delta_0^\infty  b_1,\ldots, \Delta_0^\infty  b_s].
\]
For every~$j \geqslant 0$, we have
\[
    jl \leqslant \operatorname{difftrdeg}^{\Delta_0}_K B_j \leqslant jl+s.
\]
This inequality implies that there exists~$N$ such that, for every~$j>N$,~$\delta_1^j a_1,\ldots,\delta_1^j a_l$ are~$\Delta_0$-independent over~$B_j$. 
Consider any non-zero $\Delta_0$-homomorphism
\[
    \Tilde{\varphi}\colon B_N \rightarrow L,
\]
where~$L \supseteq K$ is a $\Delta_0$-field.
Due to the~$\Delta_0$-independence of the rest of the elements~$\delta^j_1 a_i$ for~$1\leqslant i\leqslant l$ and $j > N$, $\Tilde{\varphi}$ can be extended to a~homomorphism~$\varphi\colon A_0 \rightarrow L$ so that~$\varphi(\delta_1^j a_i) = 0$ for every~$1\leqslant i \leqslant l$ and~$j > N$.

Consider a Taylor homomorphism $T_{\varphi, 0} \colon A_0 \to L[[z]]$ with respect to $\delta_1$.
Since $\varphi$ was a $\Delta_0$-homomorphism, $T_{\varphi, 0}$ is a $\Delta$-homomorphism.
It remains to observe that the kernel of~$T_{\varphi, 0}$ contains~$\delta_1^{N+1}a_1,\ldots,\delta_1^{N+1}a_l$ contradicting to the fact that~$A_0$ is~$\Delta$-simple.
\end{proof}

\begin{theorem}[Ritt's theorem of zeroes]\label{Ritt}
Let $W \subseteq \mathbb{C}^m$ be a domain and
let~$K \subseteq \mathcal{M}er_m(W)$ be a~$\Delta$-field. 
Let~$A$ be a finitely generated~$\Delta$-algebra over~$K$. 

Then there exists a non-trivial $\Delta$-homomorphism~$f: A \rightarrow \mathcal{M}er_m(U)$ for some domain ${U \subseteq W \subseteq \mathbb{C}^m}$ such that $f(a)$ is~$\Delta$-algebraic over~$K$ for any~$a\in A$.
\end{theorem}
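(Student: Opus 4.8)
The plan is to reduce Ritt's theorem to Seidenberg's theorem (Theorem~\ref{main}) via the $\Delta$-simple machinery of Lemma~\ref{lemmasimple}, the only genuine difficulty being that Seidenberg's theorem requires the base field to be at most countably $\Delta$-generated while here $K$ is arbitrary. I would dispose of this first by descending to a countable subfield. Assuming $A \neq 0$ (otherwise the statement is vacuous), write $A = K[\Delta^\infty x_1, \ldots, \Delta^\infty x_n]/J$ for a $\Delta$-ideal $J$. The $\Delta$-polynomial ring has countable $K$-dimension, so $J$ admits an at most countable $K$-basis; let $K_0 \subseteq K$ be the $\Delta$-subfield generated over $\mathbb{Q}$ by the (countably many) coefficients of such a basis, which is at most countably $\Delta$-generated. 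Setting $J_0 := J \cap K_0[\Delta^\infty x_1, \ldots, \Delta^\infty x_n]$, one checks that $J_0$ is a $\Delta$-ideal (using that $K_0$ is closed under $\Delta$) and that $J = K \cdot J_0$; by flatness of $K$ over $K_0$ the finitely $\Delta$-generated $K_0$-algebra $A_0 := K_0[\Delta^\infty x_1, \ldots, \Delta^\infty x_n]/J_0$ satisfies $A = K \otimes_{K_0} A_0$.

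Next I would produce a non-trivial $\Delta$-homomorphism out of $A_0$. Since $A \neq 0$, also $A_0 \neq 0$, so by Zorn's lemma $A_0$ has a maximal proper $\Delta$-ideal $I_0$ (the union of a chain of proper $\Delta$-ideals omits $1$, hence is proper), and $\bar A_0 := A_0/I_0$ is $\Delta$-simple and finitely $\Delta$-generated over $K_0$. Lemma~\ref{lemmasimple} shows that $\bar A_0$ is a domain. To see that it is moreover $\Delta$-algebraic over $K_0$, let $\bar a_1, \ldots, \bar a_n$ be the images of the generators, extract from them a $\Delta$-transcendence basis $\bar a_1, \ldots, \bar a_\ell$, and note that $\bar a_{\ell+1}, \ldots, \bar a_n$ are $\Delta$-algebraic over $R := K_0[\Delta^\infty \bar a_1, \ldots, \Delta^\infty \bar a_\ell]$. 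Lemma~\ref{lemmacoef} supplies an invertible $\mathbb{Q}$-linear change of derivations $\Delta \to \Delta^\ast$ making these elements $\Delta^\ast$-integral over $R$; since the change is invertible it preserves both $\Delta$-transcendence degree and $\Delta$-simplicity, so the second part of Lemma~\ref{lemmasimple} applies and forces $\ell = 0$. Hence $\bar A_0$, and therefore $L_0 := \Frac(\bar A_0)$, is $\Delta$-algebraic over $K_0$.

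Now $K_0 \subseteq \Mer_m(W)$ is at most countably $\Delta$-generated and $L_0$ is a finitely $\Delta$-generated $\Delta$-algebraic extension of $K_0$, so Seidenberg's theorem yields a domain $U \subseteq W$ and a $\Delta$-embedding $e \colon L_0 \to \Mer_m(U)$ over $K_0$. Composing $A_0 \twoheadrightarrow \bar A_0 \hookrightarrow L_0 \xrightarrow{e} \Mer_m(U)$ gives a non-trivial $\Delta$-homomorphism $\phi_0 \colon A_0 \to \Mer_m(U)$ over $K_0$ whose image lies in $e(L_0)$ and is therefore $\Delta$-algebraic over $K_0$.

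Finally I would transport $\phi_0$ back to $A$ by base change, which is exactly where it is essential that Ritt's theorem asks only for a \emph{non-trivial} (not injective) homomorphism. Using $A = K \otimes_{K_0} A_0$, define $\phi \colon A \to \Mer_m(U)$ by $\phi(c \otimes a) = c \cdot \phi_0(a)$ for $c \in K \subseteq \Mer_m(W) \subseteq \Mer_m(U)$ and $a \in A_0$; the universal property of the tensor product (together with $\phi_0|_{K_0}$ being the inclusion) shows this is well-defined, and the Leibniz rule shows it is a $\Delta$-homomorphism over $K$ with $\phi(1) = 1 \neq 0$. Every element of $\phi(A)$ is a finite $K$-combination of elements of $\phi_0(A_0)$, each $\Delta$-algebraic over $K_0 \subseteq K$, and since the $\Delta$-algebraic elements of $\Mer_m(U)$ over $K$ form a field, every $\phi(a)$ is $\Delta$-algebraic over $K$, as required. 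The main obstacle throughout is the passage between the possibly uncountable $K$ and the countable $K_0$ that Seidenberg's theorem demands; descending the defining $\Delta$-ideal and re-ascending only the homomorphism keeps this elementary and sidesteps any linear-disjointness subtleties that would arise if one tried to descend the algebra's relations or to extend an embedding of $L_0$ to one of $\Frac(A)$ over $K$.
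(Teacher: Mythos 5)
Your proposal is correct and follows essentially the same route as the paper's own proof: descend to a countably $\Delta$-generated subfield $K_0$ with $A = K \otimes_{K_0} A_0$, pass to the quotient of $A_0$ by a maximal $\Delta$-ideal, apply Lemma~\ref{lemmasimple} and Seidenberg's theorem to embed $\Frac(A_0/I)$ into $\Mer_m(U)$, and extend back to $A$ by tensoring with $K$. You even make explicit a step the paper leaves implicit, namely invoking Lemma~\ref{lemmacoef} (and checking that the invertible change of derivations preserves $\Delta$-simplicity and $\Delta$-transcendence degree) so that the $\Delta$-integrality hypothesis of Lemma~\ref{lemmasimple} is actually met.
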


\begin{proof}
We can represent $A$ as $A = R / J$, where~$R := K[\Delta^\infty x_1,\ldots,\Delta^\infty x_n]$ and $J \subseteq R$ is a differential ideal. 
Since $R$ is a countable-dimensional $K$-space, $J$ can be generated by at most countable set of generators.
Pick any such set and denote the $\Delta$-field generated by the coefficients of the generators by $K_0$.
Then $K_0$ is countably $\Delta$-generated.
Let $R_0 := K_0[\Delta^\infty x_1,\ldots,\Delta^\infty x_n]$.
Since $J$ is defined over $K_0$, for $A_0 := R_0/(J \cap R_0)$, we have~$A = K \otimes_{K_0} A_0$.

Let~$I$ be a maximal differential ideal of~$A_0$, and consider the canonical projection~$\pi \colon A_0\rightarrow A_0/I$.
Let~$a_1,\ldots,a_n$ be a set of $\Delta$-generators of $A_0 / I$. 
Since~$A_0/I$ is differentially simple, by Lemma~\ref{lemmasimple}, $A_0/I$ does not have zero divisors and is $\Delta$-algebraic over $K_0$. 
We apply Theorem~\ref{main} to the $\Delta$-fields $K_0 \subseteq \Frac(A_0 / I)$ and obtain a $\Delta$-embedding $h \colon A_0/I \rightarrow \Mer_m(U)$ over $K$. 
Since~$A_0/I$ is~$\Delta$-algebraic over~$K_0$,~$h(a)$ is~also~$\Delta$-algebraic over~$K_0$ for any~$a \in A_0/I$. Let~$f_0 := h\circ\pi$, then $f_0(a)$ is~$\Delta$-algebraic over~$K_0$ for any~$a\in A_0$. Since~$K \subseteq \Mer_m(U)$, we can construct a nontrivial $\Delta$-homomorphism $f \colon K\otimes_{K_0} A_0 \to \Mer_m(U)$ as the tensor product of the embedding $K \to \Mer_m(U)$ and $f$.
The $\Delta$-algebraicity of the image of $f_0$ over $K_0$ implies the $\Delta$-algebraicity of the image of the image of $f$ over $K$.   
\end{proof}

%%%%%%%%%%%%%%%%%%%%%%%%%%%%%%%%%%%%%%

\section{Remarks on the analytic spectrum}\label{sec:spec}

\begin{definition}[Analytic spectrum]
Consider $\mathbb{C}$ as a $\Delta$-field with the zero derivations.
Let $A$ be a finitely $\Delta$-generated $\Delta$-algebra over $\mathbb{C}$.
A homomorphism (not necessarily differential) $\psi \colon A \to \mathbb{C}$ of $\mathbb{C}$-algebras is called \emph{analytic} if, for every $a \in A$, the formal power series $T_{\psi, 0}(a)$ has a positive radius of convergence.
The set of the kernels of analytic $\mathbb{C}$-homomorphisms is called the \emph{analytic spectrum of}~$A$.  
\end{definition}

Corollary~\ref{cor:holomorphic} implies the following.

\begin{corollary}
Let~$A$ be a finitely generated ~$\Delta$-algebra with identity over~$\mathbb{C}$. Then the analytic spectrum of~$A$ is a Zarisky-dense subset of its maximal spectrum. 
\end{corollary}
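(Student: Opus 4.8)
The plan is to reduce Zariski-density to a statement about basic open sets and then produce analytic points by localizing and invoking Corollary~\ref{cor:holomorphic}. Recall that the sets $D(g) := \{\mathfrak{m} \in \operatorname{MaxSpec}(A) \mid g \notin \mathfrak{m}\}$ for $g \in A$ form a basis of the Zariski topology, so a subset is dense precisely when it meets every non-empty $D(g)$. Since $A$ is a finitely generated $\mathbb{C}$-algebra, it is a Jacobson ring and its Jacobson radical coincides with its nilradical; hence $D(g) \neq \emptyset$ if and only if $g$ is not nilpotent. Thus it suffices to show that for every non-nilpotent $g \in A$ there is an analytic $\mathbb{C}$-homomorphism $\psi \colon A \to \mathbb{C}$ with $\psi(g) \neq 0$, since then $\ker\psi$ is a point of the analytic spectrum lying in $D(g)$.

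Fix such a $g$. First I would pass to the localization $A[1/g]$. The derivations in $\Delta$ extend uniquely to $A[1/g]$ via $\delta(1/g) = -\delta(g)/g^2$, making it a $\Delta$-algebra over $\mathbb{C}$; moreover, if $a_1, \ldots, a_n$ are $\Delta$-generators of $A$, then $a_1, \ldots, a_n, 1/g$ are $\Delta$-generators of $A[1/g]$, so $A[1/g]$ is again finitely $\Delta$-generated. As $g$ is not nilpotent, $A[1/g] \neq 0$, so Corollary~\ref{cor:holomorphic} yields a non-trivial $\Delta$-homomorphism $f \colon A[1/g] \to \mathcal{O}_m(U)$ for some domain $U \subseteq \mathbb{C}^m$.

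Next I would produce the desired $\mathbb{C}$-point by evaluation. Pick any $w \in U$ and set $\psi := \operatorname{ev}_w \circ f \circ \iota$, where $\iota \colon A \to A[1/g]$ is the localization map; this is a $\mathbb{C}$-algebra homomorphism $A \to \mathbb{C}$. Since $g$ is invertible in $A[1/g]$, we have $\psi(g)\,\psi(1/g) = \psi(1) = 1$, hence $\psi(g) \neq 0$. To check that $\psi$ is analytic, observe that because $f$ is a $\Delta$-homomorphism, for any $a \in A$ and any $\alpha \in \mathbb{Z}_{\geqslant 0}^m$ one has $\psi(\delta^\alpha a) = (\delta^\alpha f(\iota a))(w)$; consequently
\[
  T_{\psi, 0}(a) = \sum_{\alpha \in \mathbb{Z}_{\geqslant 0}^m} (\delta^\alpha f(\iota a))(w)\,\dfrac{z^\alpha}{\alpha!}
\]
is exactly the Taylor expansion of the holomorphic function $f(\iota a)$ about the point $w$. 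As $f(\iota a)$ is holomorphic on $U \ni w$, this series has a positive radius of convergence, so $\psi$ is analytic by definition.

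Since $g$ was an arbitrary non-nilpotent element, the analytic spectrum meets every non-empty basic open set and is therefore Zariski-dense in $\operatorname{MaxSpec}(A)$. The one point requiring care is the verification that evaluation at a point of $U$ yields an \emph{analytic} homomorphism in the precise sense of the definition, that is, that the formal series $T_{\psi,0}(a)$ coincides with the convergent Taylor expansion of $f(\iota a)$; this hinges entirely on $f$ intertwining $\Delta$ with partial differentiation, while the remaining verifications (stability of finite $\Delta$-generation under localization and the Jacobson property) are routine.
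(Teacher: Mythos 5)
Your proof is correct and takes essentially the same route as the paper: both localize at the non-nilpotent element $g$, apply Corollary~\ref{cor:holomorphic} to $A[1/g]$ to get $f\colon A[1/g]\to\mathcal{O}_m(U)$, and evaluate at a point of $U$ to obtain an analytic $\mathbb{C}$-homomorphism whose kernel is a maximal ideal avoiding $g$ (the paper merely phrases this as a contradiction with non-density and picks a point where $f(g)$ does not vanish, whereas you observe that invertibility of $g$ in $A[1/g]$ makes $f(g)$ nowhere zero, so any point works). One small correction: $A$ is finitely $\Delta$-generated, hence only \emph{countably} generated as a commutative $\mathbb{C}$-algebra, so your Jacobson-ring justification is wrong as stated---but harmlessly so, since the only implication your argument actually uses, namely that $D(g)\neq\emptyset$ forces $g$ to be non-nilpotent, is trivial, and the converse is supplied by your construction itself.
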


\begin{proof}
Assume that analytic spectrum of~$A$ is not Zarisky-dense in $\operatorname{spec} A$. 
Then it is contained in some maximal proper closed subset~$F = \{M \in \operatorname{spec}A | a\in M\}$ for some non-nilpotent~$a\in A$.

Consider the localization~$A[a^{-1}]$ and a non-trivial~$\Delta$-homomorphism~$f\colon A[a^{-1}] \rightarrow \mathcal{O}_m(U)$
given by Corollary~\ref{cor:holomorphic}.
Then~$f(a)\neq 0$. 
We fix $u \in U$ such that $f(a)(u) \neq 0$ and consider a homomorphism~$g\colon A \rightarrow \mathbb{C}$ defined by $g(b) := f(b)(u)$ for every~$b\in A$. 
Note that~$I := \operatorname{Ker}(g)$ is an analytic ideal such that~$a\not\in I$, so we have arrived at the contradiction.
\end{proof}

%%%%%%%%%%%%%%%%%%%%%%%%%%%%%%

\subsection*{Acknowledgements}
GP was partially supported by NSF grants DMS-1853482, DMS-1760448, and  DMS-1853650 and by the Paris Ile-de-France region.

\bibliographystyle{abbrvnat}
\bibliography{bibdata}

\begin{thebibliography}{21}
\providecommand{\natexlab}[1]{#1}
\providecommand{\url}[1]{\texttt{#1}}
\expandafter\ifx\csname urlstyle\endcsname\relax
  \providecommand{\doi}[1]{doi: #1}\else
  \providecommand{\doi}{doi: \begingroup \urlstyle{rm}\Url}\fi

\bibitem[Binyamini(2017)]{Binyamini2017}
G.~Binyamini.
\newblock Bezout-type theorems for differential fields.
\newblock \emph{Compositio Mathematica}, 153\penalty0 (4):\penalty0 867--888,
  2017.
\newblock URL \url{https://doi.org/10.1112/s0010437x17007035}.

\bibitem[Boulier and Lemaire()]{Boulier}
F.~Boulier and F.~Lemaire.
\newblock Differential algebra and system modeling in cellular biology.
\newblock In \emph{Algebraic Biology}, pages 22--39. Springer Berlin
  Heidelberg.
\newblock URL \url{https://doi.org/10.1007/978-3-540-85101-1_3}.

\bibitem[Buium(1995)]{Buium1995}
A.~Buium.
\newblock Geometry of differential polynomial functions, {III}: Moduli spaces.
\newblock \emph{American Journal of Mathematics}, 117\penalty0 (1):\penalty0 1,
  1995.
\newblock URL \url{https://doi.org/10.2307/2375035}.

\bibitem[Fischer and Lieb(2012)]{FL}
W.~Fischer and I.~Lieb.
\newblock \emph{A Course in Complex Analysis}.
\newblock Vieweg+Teubner Verlag, 2012.
\newblock URL \url{https://doi.org/10.1007/978-3-8348-8661-3}.

\bibitem[Fliess(1987)]{Fliess1987}
M.~Fliess.
\newblock Nonlinear control theory and differential algebra: Some illustrative
  examples.
\newblock \emph{{IFAC} Proceedings Volumes}, 20\penalty0 (5):\penalty0
  103--107, 1987.
\newblock URL \url{https://doi.org/10.1016/s1474-6670(17)55072-7}.

\bibitem[Freitag and Scanlon(2017)]{jfunc}
J.~Freitag and T.~Scanlon.
\newblock Strong minimality and the $j$-function.
\newblock \emph{Journal of the European Mathematical Society}, 23\penalty0
  (1):\penalty0 119--136, 2017.
\newblock URL \url{https://doi.org/10.4171/jems/761}.

\bibitem[Gauchman and Rubel(1989)]{Gauchman1989}
H.~Gauchman and L.~A. Rubel.
\newblock Sums of products of functions of x times functions of y.
\newblock \emph{Linear Algebra and its Applications}, 125:\penalty0 19--63,
  1989.
\newblock URL \url{https://doi.org/10.1016/0024-3795(89)90031-1}.

\bibitem[Gerasimova et~al.(2017)Gerasimova, Razmyslov, and Pogudin]{GRP}
O.~V. Gerasimova, Y.~P. Razmyslov, and G.~A. Pogudin.
\newblock Rolling simplexes and their commensurability. {III} ({C}apelli
  identities and their application to differential algebras).
\newblock \emph{Journal of Mathematical Sciences}, 221:\penalty0 315--325,
  2017.
\newblock URL \url{https://doi.org/10.1007/s10958-017-3229-3}.

\bibitem[Goursat(1945)]{Goursat}
E.~Goursat.
\newblock \emph{A Course in Mathematical Analysis}, volume II, part two.
\newblock Dover Publications, Inc., 1945.

\bibitem[Hardouin and Singer(2008)]{Hardouin2008}
C.~Hardouin and M.~F. Singer.
\newblock Differential {G}alois theory of linear difference equations.
\newblock \emph{Mathematische Annalen}, 342\penalty0 (2):\penalty0 333--377,
  2008.
\newblock URL \url{https://doi.org/10.1007/s00208-008-0238-z}.

\bibitem[Kolchin(1973)]{Kolchin}
E.~Kolchin.
\newblock \emph{Differential Algebra and Algebraic Groups}.
\newblock Academic Press, New York, 1973.

\bibitem[Lang(2002)]{Lang}
S.~Lang.
\newblock \emph{Algebra}.
\newblock Springer-Verlag New York, Inc., 2002.

\bibitem[Lemaire(2003)]{Lemaire2003}
F.~Lemaire.
\newblock An orderly linear {PDE} system with analytic initial conditions with
  a non-analytic solution.
\newblock \emph{Journal of Symbolic Computation}, 35\penalty0 (5):\penalty0
  487--498, 2003.
\newblock URL \url{https://doi.org/10.1016/s0747-7171(03)00017-8}.

\bibitem[Marker et~al.()Marker, Marker, Messmer, and Pillay]{Marker}
D.~Marker, D.~Marker, M.~Messmer, and A.~Pillay.
\newblock Model theory of differential fields.
\newblock In \emph{Model Theory of Fields}, pages 38--113. Cambridge University
  Press.
\newblock URL \url{https://doi.org/10.1017/9781316716991.003}.

\bibitem[Nagloo(2021)]{Nagloo2021}
J.~Nagloo.
\newblock Model theory and differential equations.
\newblock \emph{Notices of the American Mathematical Society}, 68\penalty0
  (02):\penalty0 1, 2021.
\newblock URL \url{https://doi.org/10.1090/noti2221}.

\bibitem[Pila and Tsimerman(2016)]{Pila2016}
J.~Pila and J.~Tsimerman.
\newblock Ax-{S}chanuel for the j-function.
\newblock \emph{Duke Mathematical Journal}, 165\penalty0 (13), 2016.
\newblock URL \url{https://doi.org/10.1215/00127094-3620005}.

\bibitem[Razmyslov(1994)]{YuPbook}
Y.~P. Razmyslov.
\newblock \emph{Identities of Algebras and their Representations}.
\newblock Translations of Mathematical Monographs. American Mathematical
  Society, 1994.

\bibitem[Riquier(1910)]{Riquier}
C.~Riquier.
\newblock \emph{Les systèmes d’équations aux dérivées partielles}.
\newblock Gauthier-Villars, 1910.

\bibitem[Ritt(1950)]{Ritt}
J.~F. Ritt.
\newblock \emph{Differential Algerba}.
\newblock 1950.

\bibitem[Seidenberg(1958)]{Seid1958}
A.~Seidenberg.
\newblock Abstract differential algebra and the analytic case.
\newblock \emph{Proceedings of the American Mathematical Society}, 9\penalty0
  (1):\penalty0 159--164, 1958.
\newblock URL \url{https://doi.org/10.2307/2033416}.

\bibitem[van~der Put and Singer(2003)]{vanderPut2003}
M.~van~der Put and M.~F. Singer.
\newblock \emph{Galois Theory of Linear Differential Equations}.
\newblock Springer Berlin Heidelberg, 2003.
\newblock URL \url{https://doi.org/10.1007/978-3-642-55750-7}.

\end{thebibliography}

\end{document}